\newcommand{\set}[1]{\{#1\}}
\newcommand{\inset}[2]{\{#1 \colon #2\}}
\newcommand{\size}[1]{|#1|}
\newcommand{\order}[1]{O(#1)}
\newcommand{\comp}[1]{\mu(#1)}
\newcommand{\rest}{\mathbin{\mid}}
\newcommand{\cont}{\mathbin{\slash}}
\newcommand{\xor}{\mathbin{\triangle}}
\renewcommand{\poly}{{\rm poly}}
\newcommand{\mat}{\mathbf M}
\newcommand{\parent}[1]{{\tt par}(#1)}
\newtheorem{lemma}{Lemma}
\newtheorem{theorem}[lemma]{Theorem}
\newtheorem{corollary}[lemma]{Corollary}
\newtheorem{proposition}[lemma]{Proposition}
\newtheorem{definition}[lemma]{Definition}
\newenvironment{claim*}[1]{{\par\noindent\underline{Claim:}\space{#1}}}{}
\newcolumntype{\expand}{}
\long\@namedef{NC@rewrite@\string\expand}{\expandafter\NC@find}
\newcommand{\problemtitle}[1]{\gdef\@problemtitle{#1}}
\newcommand{\probleminput}[1]{\gdef\@probleminput{#1}}
\newcommand{\problemoutput}[1]{\gdef\@problemoutput{#1}}
  \par\addvspace{.5\baselineskip}
  \par\addvspace{.5\baselineskip}
\newcommand*\patchAmsMathEnvironmentForLineno[1]{
  \expandafter\let\csname old#1\expandafter\endcsname\csname #1\endcsname
  \expandafter\let\csname oldend#1\expandafter\endcsname\csname end#1\endcsname
  \renewenvironment{#1}
     {\linenomath\csname old#1\endcsname}
     {\csname oldend#1\endcsname\endlinenomath}}
\newcommand*\patchBothAmsMathEnvironmentsForLineno[1]{
  \patchAmsMathEnvironmentForLineno{#1}
  \patchAmsMathEnvironmentForLineno{#1*}}
\journal{Information and Computation}
\begin{document}

\begin{frontmatter}



\title{Polynomial-Delay Enumeration of Large Maximal Common Independent Sets in Two Matroids and Beyond\tnoteref{grants}}

\tnotetext[grants]{A preliminary version of this paper has been published in \cite{kobayashi_et_al:LIPIcs.MFCS.2023.58}. This work is partially supported by JSPS KAKENHI Grant Numbers JP20H00595, JP23H03344, JP21K17812, JP22H03549, and JP22K17849, JST CREST Grant Number JPMJCR18K3, and JST ACT-X Grant Number JPMJAX2105.}





\author[HU]{Yasuaki Kobayashi}

\affiliation[HU]{organization={Hokkaido University},
            addressline={Kita 14-jo Nishi 9-choume, Kita-ku}, 
            city={Sapporo},
            postcode={060-0814}, 
            state={Hokkaido},
            country={Japan}}

\author[NU]{Kazuhiro Kurita}
\author[HouseiU]{Kunihiro Wasa}

\affiliation[NU]{organization={Nagoya University}, 
            addressline={Furocho, Chikusa-ku}, 
            city={Nagoya},
            postcode={464-8601}, 
            state={Aichi},
            country={Japan}}

\affiliation[HouseiU]{organization={Hosei University},
            addressline={Kajinocho}, 
            city={Koganei},
            postcode={184-8584}, 
            state={Tokyo},
            country={Japan}}

\begin{keyword}
    Polynomial-delay enumeration \sep Ranked enumeration \sep Matroid intersection \sep Matroid matching \sep Reverse search
\end{keyword}

\begin{abstract}
    Finding a \emph{maximum} cardinality common independent set in two matroids (also known as \textsc{Matroid Intersection}) is a classical combinatorial optimization problem, which generalizes several well-known problems, such as finding a maximum bipartite matching, a maximum colorful forest, and an arborescence in directed graphs.
    Enumerating all \emph{maximal} common independent sets in two (or more) matroids is a classical enumeration problem.
    In this paper, we address an ``intersection'' of these problems: Given two matroids and a threshold $\tau$, the goal is to enumerate all maximal common independent sets in the matroids with cardinality at least $\tau$.
    We show that this problem can be solved in polynomial delay and polynomial space.
    Moreover, our technique can be extended to a more general problem, which is relevant to \textsc{Matroid Matching}.
    We give a polynomial-delay and polynomial-space algorithm for enumerating all maximal ``matchings'' with cardinality at least $\tau$, assuming that the optimization counterpart is ``tractable'' in a certain sense.
    This extension allows us to enumerate small minimal connected vertex covers in subcubic graphs.
    We also discuss a framework to convert enumeration with cardinality constraints into ranked enumeration.
\end{abstract}


\end{frontmatter}

\section{Introduction}
The bipartite matching problem is arguably one of the most famous combinatorial optimization problems, which asks to find a maximum cardinality matching in a bipartite graph.
This problem can be solved in polynomial time by polynomial-time algorithms for the maximum flow problem.
The problem is naturally generalized for non-bipartite graphs, which is also solvable in polynomial time~\cite{edmonds_1965}.

Another natural generalization of the bipartite matching problem is \textsc{Matroid Intersection}.
In this problem, we are given two matroids $\mat_1 = (S, \mathcal I_1)$ and $\mat_2 = (S, \mathcal I_2)$, where $\mathcal I_1 \subseteq 2^S$ and $\mathcal I_2 \subseteq 2^S$ are the collections of independent sets of $\mat_1$ and $\mat_2$, respectively, and asked to find a maximum cardinality \emph{common independent set} of $\mat_1$ and $\mat_2$, that is, a maximum cardinality set in $\mathcal I_1 \cap \mathcal I_2$.
When both $\mat_1$ and $\mat_2$ are partition matroids, this problem is equivalent to the bipartite $b$-matching problem, which is a generalization of the bipartite matching problem.
A famous matroid intersection theorem~\cite{edmonix1969submodular} shows a min-max formula and also gives a polynomial-time algorithm for \textsc{Matroid Intersection}~\cite{Lawler1975}.

These classical results give efficient algorithms to find a \emph{single} best (bipartite) matching in a graph or common independent set in two matroids.
This type of objective serves as the gold standard in many algorithmic and computational studies.
However, such a single best solution may not be appropriate for real-world problems due to the complex nature of them~\cite{Eppstein:k-best:2016}.

One possible remedy to this issue is to enumerate \emph{multiple} solutions instead of a single best one.
From the viewpoint of enumeration, the problems of enumerating \emph{maximal} or \emph{maximum} (bipartite) matchings and its generalization are studied in the literature~\cite{DBLP:journals/siamcomp/TsukiyamaIAS77,DBLP:conf/spire/ConteGMUV17,10.1007/3-540-45678-3_32,10.1007/3-540-63890-3_11}.
Enumerating maximal independent sets in a graph is one of the best-studied problems in this area and is solvable in polynomial delay and polynomial space~\cite{DBLP:journals/siamcomp/TsukiyamaIAS77,DBLP:conf/spire/ConteGMUV17}.
Due to the equivalence between matchings in a graph and independent sets in its line graph, we can enumerate all maximal matchings in polynomial delay and polynomial space as well.
Moreover, several enumeration algorithms that are specialized to (bipartite) matchings are known~\cite{10.1007/3-540-45678-3_32,10.1007/3-540-63890-3_11}.

Enumeration algorithms for matroids are also frequently studied in the literature~\cite{FUKUDA1995231,Lawler:Lenstra:Rinnooy:SIAM:1980,Boros:MFCS:2002,DBLP:journals/siamdm/KhachiyanBEGM05,DBLP:journals/algorithmica/KhachiyanBBEGM08}.
Lawler et al.~\cite{Lawler:Lenstra:Rinnooy:SIAM:1980} showed that all maximal common independent sets in $k$ matroids can be enumerated in polynomial delay when $k$ is constant.
For general $k$, this problem is highly related to \textsc{Dualization} (or equivalently \textsc{Minimal Transversal Enumeration}), which can be solved in output quasi-polynomial time\footnote{An enumeration algorithm runs in output quasi-polynomial time if it runs in time $N^{(\log N)^c}$, where $c$ is a constant and $N$ is the combined size of the input and output.}~\cite{Boros:MFCS:2002}.
Apart from common independent sets, enumeration problems related to matroids are studied~\cite{10.1007/11841036_41,DBLP:journals/siamdm/KhachiyanBEGM05}, such as minimal multiway cuts~\cite{DBLP:journals/siamdm/KhachiyanBEGM05} and minimal Steiner forests in graphs~\cite{10.1007/11841036_41}.

In this paper, we consider an ``intersection'' of the above two worlds, optimization and enumeration, for \textsc{Matroid Intersection}.
More specifically, given two matroids $\mat_1$ and $\mat_2$ and an integer $\tau$, we consider the problem of enumerating all \emph{maximal} common independent sets of $\mat_1$ and $\mat_2$ \emph{with cardinality at least $\tau$}.
We refer to this problem as \textsc{Large Maximal Common Independent Set Enumeration}.
By setting $\tau = 0$, we can enumerate all maximal common independent sets of $\mat_1$ and $\mat_2$, and by setting $\tau = {\rm opt}$, we can enumerate all maximum common independent sets of $\mat_1$ and $\mat_2$, where ${\rm opt}$ is the optimal value of \textsc{Matroid Intersection}.
We would like to mention that simultaneously handling these two constraints, maximality and cardinality, would make enumeration problems more difficult (see \cite{Kobayashi:WG:2022,Kobayashi:arXiv:2020,Korhonen:arXiv:2020}, for other enumeration problems).
We show that \textsc{Large Maximal Common Independent Set Enumeration} can be solved in polynomial delay and space.
This extends the results of enumerating maximum common independent sets due to \cite{FUKUDA1995231} and enumerating maximal common independent sets due to \cite{Lawler:Lenstra:Rinnooy:SIAM:1980}.
Thanks to its generality, our enumeration algorithm allows us to enumerate several combinatorial objects with maximality and cardinality constraints, such as bipartite $b$-matchings, colorful forests, and degree-constraint subdigraphs.

To prove this, we devise a reverse search algorithm~\cite{Avis::1996} to enumerate all maximal common independent sets of $\mat_1$ and $\mat_2$.
This algorithm enumerates the solutions in a depth first manner.
To completely enumerate all the solutions without duplicates, we carefully design its search strategy.
We exploit a famous augmenting path theorem for \textsc{Matroid Intersection}~\cite{Lawler1975}.
This enables us to design a ``monotone'' search strategy, yielding a polynomial-delay and polynomial-space enumeration algorithm.
A similar idea is used in \cite{Kobayashi:WG:2022} for \textsc{Large Maximal Matching Enumeration}, that is, enumerating maximal matchings with cardinality at least $\tau$, but we need several nontrivial lemmas to obtain our result.

As a common generalization of a matching in graphs and a common independent set in two matroids, 
a ``matching in a matroid'' is also known.
Let $\mat = (S, \mathcal I)$ be a matroid and $G = (S, E)$ be a graph.
A set of edges $M \subseteq E$ is a \emph{matching} of $(\mat, G)$
if $M$ is a matching in $G$ and $\bigcup_{e \in M}e$ is independent in $\mat$.
The problem of finding a maximum cardinality matching is known as \textsc{Matroid Matching}.
In this paper, we address the problem of enumerating large maximal matchings of a matroid.
However, there is an obstacle to efficiently enumerate them since \textsc{Matroid Matching} cannot be solved in polynomial time when the matroid is given as an independence oracle~\cite{doi:10.1137/0211014}.
Thus, in this paper, we assume that the pair $(\mat, G)$ is \emph{tractable} in a certain sense (which will be defined in \Cref{sec:prelim}).
This assumption would be acceptable since there are many special cases for which $(\mat, G)$ is tractable.
We also give a polynomial-delay and polynomial-space enumeration algorithm for this extension, which includes \textsc{Large Maximal Common Independent Set Enumeration} and \textsc{Large Maximal Matching Enumeration}~\cite{Kobayashi:WG:2022} as special cases.

Finally, we discuss a framework to convert our algorithms into ranked enumeration algorithms.
Although our algorithms enumerate all maximal solutions with cardinality at least $\tau$, 
solutions may not be generated in a sorted order, which is of great importance in database community~\cite{DBLP:conf/pods/RavidMK19,Deep:ICDT:2021}.
A \emph{ranked enumeration algorithm} is an algorithm that enumerates all the solutions in a non-increasing order of their cardinality (or, more generally, objective value).
We discuss how to convert our enumeration algorithms to the ones that enumerate in a ranked manner with a small overhead in the running time.

\section{Preliminaries}\label{sec:prelim}
Let $S$ be a finite set.
We denote the cardinality of $S$ as $n$.
For two sets $X$ and $Y$, the symmetric difference of $X$ and $Y$ is defined as $X \xor Y \coloneqq (X\setminus Y) \cup (Y\setminus X)$.
A pair $\mat = (S, \mathcal I)$ is called a \emph{matroid} if $\mat$ satisfies the following conditions:
\begin{enumerate}
    \item  $\emptyset \in \mathcal I$,
    \item  if $I \in \mathcal I$ and $J\subseteq I$, then $J \in \mathcal I$, and 
    \item  if $I, J \in \mathcal I$ and $\size{I} < \size{J}$, then $I \cup \set{e} \in \mathcal I$ for some $e \in J \setminus I$.
\end{enumerate}
A subset $S'$ of $S$ is called an \emph{independent set} of $\mat$ (or \emph{independent} in $\mat$) if $S'$ is contained in $\mathcal I$ and $S'$ is called a \emph{dependent set} of $\mat$ (or \emph{dependent} in $\mat$) otherwise.
For a subset $X \subseteq S$, the \emph{rank} of $X$ is the maximum cardinality of an independent set that is contained in $X$.
An inclusion-wise maximal independent set of $\mat$ is called a \emph{base} of $\mat$, and  
an inclusion-wise minimal dependent set of $\mat$ is called a \emph{circuit} of $\mat$.
For two distinct circuits $C_1$ and $C_2$ of $M$ with $C_1 \cap C_2 \neq \emptyset$ and $e \in C_1 \cap C_2$, there always exists a circuit $C_3$ of $M$ such that $C_3 \subseteq (C_1 \cup C_2) \setminus\set{e}$.
This property is called the (\emph{weak}) \emph{circuit elimination axiom}~\cite{book:oxley}.
For a matroid $\mat = (S, \mathcal I)$ and a subset $X \subseteq S$, 
the pair $(X, \mathcal J)$ is the \emph{restriction} of $\mat$ to $X$, 
where $\mathcal J=\inset{Y \subseteq X}{Y \in \mathcal I}$.
We denote it as $\mat \rest X$.
Similarly, the pair $(S \setminus X, \mathcal J')$ is the \emph{deletion} of $X$ from $\mat$, where $\mathcal J' = \inset{Y \setminus X}{Y \in \mathcal I}$.
We denote it as $\mat \setminus X$.
Moreover, the pair $(S \setminus X, \mathcal J'')$ is the \emph{contraction} of $X$ from $\mat$, 
where $\mathcal J'' = \inset{Y \subseteq S \setminus X}{ M\rest X \text{ has a base } B \text{ such that } Y \cup B \in \mathcal I}$.
We denote it as $\mat \slash X$.
It is known that for a matroid $\mat = (S, \mathcal I)$ and $X \subseteq S$, $\mat \slash X$, 
$\mat \rest X$, and $\mat \setminus X$ are all matroids~\cite{book:oxley}.
For two matroids $\mat_1 = (S, \mathcal I_1)$ and $\mat_2 = (S, \mathcal I_2)$ defined on the same set $S$, a subset $T \subseteq S$ is a \emph{common independent set} of $\mat_1$ and $\mat_2$ if $T \in \mathcal I_1$ and $T \in \mathcal I_2$.

Let $I_1$ and $I_2$ be distinct independent sets of $\mat$.
In our algorithm, we frequently consider a matroid obtained from $\mat$ by restricting to $I_1 \cup I_2$ and then contracting $I_1 \cap I_2$.
This matroid is defined on $I_1 \xor I_2$ and has some properties shown below.

\begin{proposition}\label{prop:union_intersection}
    Let $I_1$ and $I_2$ be independent sets of $\mat$ and let $\mat' = (M \rest (I_1 \cup I_2)) \cont (I_1 \cap I_2)$.
    $I \subseteq I_1 \xor I_2$ is independent in $\mat'$ if and only if $I \cup (I_1 \cap I_2)$ is independent in $\mat$. 
\end{proposition}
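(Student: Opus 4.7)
The plan is to unfold the definitions of restriction and contraction given in the preliminaries and observe that the intermediate matroid $\mat \rest (I_1 \cup I_2)$ makes $I_1 \cap I_2$ behave like a full independent set whose only spanning base inside $I_1 \cap I_2$ is itself. The proof should then reduce to a one-line definition chase.

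First I would set $\mat'' \coloneqq \mat \rest (I_1 \cup I_2)$ so that $\mat' = \mat'' \cont (I_1 \cap I_2)$, and note that the ground set of $\mat'$ is $(I_1 \cup I_2) \setminus (I_1 \cap I_2) = I_1 \xor I_2$, which is where $I$ lives. By the definition of restriction, a subset $Y \subseteq I_1 \cup I_2$ is independent in $\mat''$ if and only if it is independent in $\mat$. In particular, since $I_1 \cap I_2 \subseteq I_1 \in \mathcal I$, the set $I_1 \cap I_2$ is independent both in $\mat$ and in $\mat''$.

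Next I would identify the bases of the restriction $\mat'' \rest (I_1 \cap I_2)$ used in the paper's contraction definition. Because $I_1 \cap I_2$ is itself independent in $\mat''$, the unique base $B$ of $\mat'' \rest (I_1 \cap I_2)$ is $B = I_1 \cap I_2$. Plugging this into the paper's definition of contraction, a set $I \subseteq I_1 \xor I_2$ is independent in $\mat' = \mat'' \cont (I_1 \cap I_2)$ if and only if $I \cup (I_1 \cap I_2)$ is independent in $\mat''$.

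Finally, applying the characterization of independence in $\mat''$ from the first step (and observing $I \cup (I_1 \cap I_2) \subseteq I_1 \cup I_2$), this is equivalent to $I \cup (I_1 \cap I_2)$ being independent in $\mat$, which is exactly the statement of the proposition. There is no real obstacle here; the only point that needs care is justifying the uniqueness of the base of $\mat'' \rest (I_1 \cap I_2)$, which follows immediately from $I_1 \cap I_2$ being independent in $\mat$.
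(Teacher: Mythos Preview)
Your proposal is correct and follows essentially the same definition-chasing approach as the paper's proof; in fact you spell out more carefully than the paper does why the unique base of $\mat'' \rest (I_1 \cap I_2)$ is $I_1 \cap I_2$ itself, whereas the paper simply asserts that $I \cup (I_1 \cap I_2)$ is independent in the restriction ``by contraction'' and declares the converse analogous.
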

\begin{proof}
    Suppose that $I$ is independent in $\mat'$.
    As $\mat'$ is a contraction of $I_1 \cap I_2$ from $\mat \rest (I_1 \cup I_2)$, $I \cup (I_1 \cap I_2)$ is independent in $\mat \rest (I_1 \cup I_2)$ and hence in $\mat$.
    The converse direction is analogous.
\end{proof}

\begin{proposition}\label{prop:maximal}
    Let $I_1$ and $I_2$ be maximal common independent sets of $\mat_1 = (S, \mathcal I_1)$ and $\mat_2 = (S, \mathcal I_2)$.
    Then, both $I_1\setminus I_2$ and $I_2 \setminus I_1$ are maximal common independent sets of two matroids 
    $\mat_1' = (\mat_1 \rest (I_1 \cup I_2))\cont (I_1 \cap I_2)$ and $\mat_2' = (\mat_2 \rest (I_1 \cup I_2))\cont (I_1 \cap I_2)$.
\end{proposition}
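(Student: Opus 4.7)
The plan is to apply Proposition~\ref{prop:union_intersection} twice: once to transfer independence of $I_1\setminus I_2$ down to the contracted matroids, and once more to derive a contradiction from any purported extension. Throughout, observe that the ground set of both $\mat_1'$ and $\mat_2'$ is $(I_1\cup I_2)\setminus(I_1\cap I_2)=I_1\xor I_2$.

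First I would establish independence. Since $I_1$ is a common independent set of $\mat_1$ and $\mat_2$, and $I_1=(I_1\setminus I_2)\cup(I_1\cap I_2)$, Proposition~\ref{prop:union_intersection} (applied with the role of the pair $(I_1,I_2)$ as in its statement) immediately yields that $I_1\setminus I_2$ is independent in both $\mat_1'$ and $\mat_2'$. The same argument shows $I_2\setminus I_1$ is a common independent set of $\mat_1'$ and $\mat_2'$.

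Next I would verify maximality by contradiction. Suppose $I_1\setminus I_2$ is not maximal in $\mat_1'\cap\mat_2'$; then there exists $e$ in the ground set $I_1\xor I_2$, and necessarily $e\notin I_1\setminus I_2$, hence $e\in I_2\setminus I_1$, such that $(I_1\setminus I_2)\cup\{e\}$ is independent in both $\mat_1'$ and $\mat_2'$. By Proposition~\ref{prop:union_intersection} in the converse direction, $(I_1\setminus I_2)\cup\{e\}\cup(I_1\cap I_2)=I_1\cup\{e\}$ is then a common independent set of $\mat_1$ and $\mat_2$, contradicting the maximality of $I_1$. The argument for $I_2\setminus I_1$ is symmetric, using the maximality of $I_2$.

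There is no real obstacle here: the whole claim is a bookkeeping exercise on top of Proposition~\ref{prop:union_intersection}. The only thing worth double-checking is that the elements one could try to add to $I_1\setminus I_2$ inside the contracted matroids are exactly the elements of $I_2\setminus I_1$, so that the resulting contradiction uses the maximality of $I_1$ with respect to a legitimate single-element extension $I_1\cup\{e\}$ with $e\notin I_1$.
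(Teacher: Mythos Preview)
Your proposal is correct and follows essentially the same route as the paper: use Proposition~\ref{prop:union_intersection} to transfer independence of $I_1\setminus I_2$ to $\mat_1'$ and $\mat_2'$, and then argue maximality by contradiction via the converse direction of the same proposition, noting that any extending element must lie in $I_2\setminus I_1$. The paper's proof is virtually identical, invoking symmetry to handle $I_2\setminus I_1$.
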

\begin{proof}
    By symmetry, it suffices to show that $I_1 \setminus I_2$ is a maximal common independent set of $\mat'_1$ and $\mat'_2$.
    By~\Cref{prop:union_intersection}, $(I_1 \setminus I_2) \cup (I_1 \cap I_2) = I_1$ is independent in $\mat_1$ if and only if $I_1 \setminus I_2$ is independent in $\mat'_1$.
    Similarly, $I_1$ is independent in $\mat_2$ if and only if $I_1 \setminus I_2$ is independent in $\mat'_2$.
    Thus, $I_1 \setminus I_2$ is a common independent set of $\mat'_1$ and $\mat'_2$.
    To see the maximality, suppose that there is $e \in I_2 \setminus I_1$ such that $(I_1 \setminus I_2) \cup \set{e}$ is a common independent set of $\mat'_1$ and $\mat'_2$.
    By~\Cref{prop:union_intersection}, $I_1 \cup \set{e}$ is a common independent set of $\mat_1$ and $\mat_2$, contradicting the maximality of $I_1$.
\end{proof}

We next define some notations for (directed) graphs.
In this paper, we assume that (directed) graphs have no self-loops.

Let $G = (V, E)$ be an undirected graph.
For $X \subseteq V$, we denote by $G[X]$ the subgraph of $G$ induced by $X$, and for $F \subseteq E$, by $G - F$ the subgraph of $G$ obtained by deleting all edges in $F$.

Let $D = (V, A)$ be a directed graph.
We say that a vertex $v$ is an \emph{out-neighbor} of $u$ ($u$ is an \emph{in-neighbor} of $v$) in $D$ if $D$ has an arc $(u, v)$. 
The set of out-neighbors of $v$ is denoted by $N^+(v)$, and the set of in-neighbors of $v$ is denoted by $N^-(v)$.
A sequence $(v_1, \dots, v_k)$ of distinct vertices is a \emph{directed path} if there is an arc $(v_i, v_{i+1})$ for $1 \le i < k$.
A directed path $(v_1, \dots, v_k)$ in $D$ is called a \emph{directed path without shortcuts}
if $D$ has no arc from $v_i$ to $v_j$ for any $1 \le i < j \le k$ with $i + 1 < j$.

We measure the time complexity of enumeration algorithms with delay complexity~\cite{Johson:Yanakakis:Papadimitriou:IPL:1988}.
The \emph{delay} of an enumeration algorithm is the maximum time elapsed between two consecutive outputs, including preprocessing and post-processing time.
An enumeration algorithm is called a \emph{polynomial-delay enumeration algorithm} if its delay is upper bounded by a polynomial of the size of an input.
An enumeration algorithm is called an \emph{linear incremental-time enumeration algorithm} if, for any $i \le N$, an algorithm outputs at least $i$ solutions in time $\order{i\cdot \poly(n)}$, where $N$ is the number of solutions~\cite{capelli_et_al:LIPIcs.STACS.2023.18}.

Now, we formally define our problems.
Throughout the paper, we assume that matroids are given as \emph{independence oracles}, that is, for a matroid $\mat = (S, \mathcal I)$, we can test whether a subset $X \subseteq S$ belongs to $\mathcal I$ by accessing an oracle for $\mat$.
Moreover, we assume that independence oracles can be evaluated in $Q$ time and in $\hat{Q}$ space.
We say that an enumeration algorithm runs in polynomial delay (resp. polynomial space) if the delay (resp. space) is upper bounded by a polynomial in $n + Q$ (resp. $n + \hat{Q}$).

\begin{definition}
    Given two matroids $\mat_1 = (S, \mathcal I_1)$ and $\mat_2 = (S, \mathcal I_2)$ represented by independence oracles and an integer $\tau$, \textsc{Large Maximal Common Independent Set Enumeration} asks to enumerate all maximal common independent sets of $\mat_1$ and $\mat_2$ with cardinality at least $\tau$.
\end{definition}

\begin{definition}
    Given two matroids $\mat_1 = (S, \mathcal I_1)$ and $\mat_2 = (S, \mathcal I_2)$ represented by independence oracles, \textsc{Ranked Maximal Common Independent Set Enumeration} asks to enumerate all maximal common independent sets of $\mat_1$ and $\mat_2$ in a non-increasing order with respect to cardinality. 
\end{definition}

Let $\mat = (S,\mathcal I)$ be a matroid and $G = (S, E)$ be a graph.
A set of edges $M \subseteq E$ is called a \emph{matching} of $(\mat, G)$
if $M$ is a matching in $G$ and $\bigcup_{e \in M}e$ is independent in $\mat$.
A matching $M$ of $(\mat, G)$ is called a \emph{maximal matching} if for any edge $f \in E \setminus M$,
$M \cup \set{f}$ is not a matching in $G$ or a dependent set of $\mat$.
As noted before, finding a maximum matching of $(\mat, G)$ cannot be solved in polynomial time under the independence oracle model~\cite{doi:10.1137/0211014}.
To design a polynomial-delay algorithm, we focus on ``tractable'' instances.
We say that $(\mat, G)$ is \emph{tractable} if it satisfies the following condition: There are algorithms for finding maximum matchings of $(\mat \rest X, H)$, $(\mat\setminus X, H')$, and $(\mat \cont X, H')$ for any $X \subseteq S$, spanning subgraph $H$ of $G[X]$, and spanning subgraph $H'$ of $G[S\setminus X]$ that run in time $\poly(n + Q)$ and space $\poly(n + \hat{Q})$.
Tractable instances would still be interesting as many natural problems can be reduced to the problem of finding a maximum matching of a tractable matroid.

Clearly, the set of all matchings in $G = (V, E)$ can be encoded to a tractable pair $(\mat_\text{free}, G)$, where $\mat_\text{free} = (V, 2^V)$ is a free matroid defined on $V$.
In particular, if $\mat$ is a linear matroid that is given as a matrix over a field $\mathbb F$, the problem of finding a maximum matching of $(\mat, G)$ is polynomial time~\cite{Gabow1986}.
From a matrix representation of $\mat$, we can compute matrix representations of $\mat \rest X$, $\mat \setminus X$, and $\mat \cont X$ over the same field in polynomial time as well~\cite{book:oxley}, meaning that $(\mat, G)$ is tractable when $\mat$ is given as a matrix over some field $\mathbb F$.
Furthermore, the collection of common independent sets of two matroids $\mat_1$ and $\mat_2$ can be encoded to a pair $(\mat_{1,2}, G_{1,2})$, where $\mat_{1,2} = (S_{1,2}, \mathcal I_{1,2})$ is a matroid such that $S_{1,2} = \{e_1 : e \in S\} \cup \{e_2 : e \in S\}$ and $G_{1,2}$ has an edge between $e_1$ and $e_2$ for $e \in S$. 
It is easy to see that $X \subseteq S$ is a common independent set of $\mat_1$ and $\mat_2$ if and only if $\bigcup_{e \in X}\{e_1, e_2\}$ is a matching of $(\mat_{1,2}, G_{1,2})$.
From the independence oracles of $\mat_1$ and $\mat_2$, we can easily simulate the independence oracles of $\mat \rest X$, $\mat \setminus X$, and $\mat \cont X$ in polynomial time, which means that $(\mat_{1,2}, G_{1,2})$ is tractable using a polynomial-time algorithm for \textsc{Matroid Intersection} (e.g., \cite{Lawler1975}).

In \Cref{sec:parity}, we address the following problem, which simultaneously generalize \textsc{Large Maximal Matching Enumeration} in~\cite{Kobayashi:WG:2022} and \textsc{Large Maximal Common Independent Set Enumeration}.

\begin{definition}
    Given a tractable pair $(\mat, G)$ with a matroid $\mat = (S, \mathcal I)$ represented by an independence oracle and a graph $G = (S, E)$ and an integer $\tau$, 
    \textsc{Large Maximal Matroid Matching Enumeration} asks to enumerate all maximal matchings of $(\mat, G)$ with cardinality at least $\tau$.
\end{definition}

We also discuss the ranked enumeration variant of \textsc{Large Maximal Matroid Matching Enumeration}.

\begin{definition}
    Given a tractable pair $(\mat, G)$ with a matroid $\mat = (S, \mathcal I)$ represented by an independence oracle and a graph $G = (S, E)$, 
    \textsc{Ranked Maximal Matroid Matching Enumeration} asks to enumerate all maximal matchings of $(\mat, G)$ in non-increasing order with respect to cardinality.
\end{definition}

\subsection{Overview of an algorithm for finding a maximum common independent set}\label{subsec:maximum}
Our proposed algorithm for \textsc{Large Maximal Common Independent Set Enumeration} leverages a well-known property used in an algorithm for finding a maximum common independent set in two matroids.
In this paper, we refer to a particular algorithm given by Lawler~\cite{Lawler1975}.

Let $\mat_1 = (S, \mathcal I_1)$ and $\mat_2 = (S, \mathcal I_2)$ be matroids. 
In Lawler's algorithm~\cite{Lawler1975}, we start with an arbitrary common independent set $I$ of $\mat_1$ and $\mat_2$ (e.g., $I \coloneqq \emptyset$), update $I$ to a larger common independent set $I'$ in some ``greedy way''. 
This update procedure is based on the following auxiliary directed graph $D_{\mat_1, \mat_2}(I) = (S \cup \set{s,t}, A)$.

Let $I \subseteq S$ be a common independent set of $\mat_1$ and $\mat_2$.
The set $A = A_1 \cup A_2 \cup A_3 \cup A_4$ of arcs in $D_{\mat_1, \mat_2}(I)$ consists of the following four types of arcs. 
The first type of arcs is defined as 
\begin{align*}
    A_1 = \inset{(e, f)}{e \in I, f \in S \setminus I, I \cup \set{f} \notin \mathcal I_1, (I \cup \set{f}) \setminus \set{e} \in \mathcal I_1},
\end{align*}
that is, an arc $(e, f) \in A_1$ indicates that $I \xor \set{e, f}$ is independent in $\mat_1$. 
Symmetrically, the second type of arcs is defined as
\begin{align*}
    A_2 = \inset{(f, e)}{e \in I, f\in  S\setminus I, I \cup\set{f} \notin \mathcal I_2, (I \cup \set{f}) \setminus \set{e} \in \mathcal I_2},
\end{align*}
that is, an arc $(f, e)$ indicates that $I \xor \set{e, f}$ is independent in $\mat_2$. 
The third and fourth types of arcs are defined as 
\begin{align*}
    A_3 &=  \inset{(s, f)}{f \in S \setminus I, I \cup \set{f} \in \mathcal I_1}\\
    A_4 &= \inset{(f, t)}{f \in S \setminus I, I \cup \set{f} \in \mathcal I_2},
\end{align*}
respectively.
Arcs $(s, f)$ and $(f, t)$ indicate that $I \cup \set{f}$ is independent in $\mat_1$ and in $\mat_2$, respectively.
We illustrate a concrete example of $D_{\mat_1, \mat_2}(I)$ in \Cref{fig:opt}.
In the following, we simply write $D(I)$ to denote $D_{\mat_1, \mat_2}(I)$.

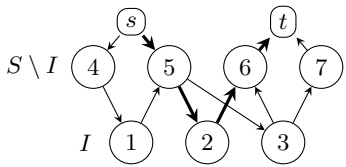
\begin{figure}[t]
    \centering
    \begin{tikzpicture}[every node/.style={draw, circle}]\small
        \newcount\c
        \newcount\l
        \newcount\a
        \c=4
        \node[rectangle, draw=black!0] at (1.7, 1) {$S \setminus I$};
        \node[rectangle, draw=black!0] at (2.4, 0) {$I$};
        \node[rectangle, rounded corners] (s) at (3, 1.6) {$s$};
        \node[rectangle, rounded corners] (t) at (5, 1.6) {$t$};
        \foreach \x [count=\i] in {1, ..., 3} {
            \node (\x) at (\c+\i-2, 0) {${\x}$};
        }    
        \foreach \x [count=\i] in {4, ..., 7} {
            \node (\x) at (\c+\i-2.5, 1) {${\x}$};
        }    
        \draw[->,>=stealth, very thick] (s) -- (5);
        \draw[->,>=stealth] (s) -- (4);
        \draw[->,>=stealth, very thick] (6) -- (t);
        \draw[->,>=stealth] (7) -- (t);
        \draw[->,>=stealth] (1) -- (5);
        \draw[->,>=stealth, very thick] (5) -- (2);
        \draw[->,>=stealth] (5) -- (3);
        \draw[->,>=stealth] (3) -- (7);
        \draw[->,>=stealth] (3) -- (6);
        \draw[->,>=stealth] (4) -- (1);
        \draw[->,>=stealth, very thick] (2) -- (6);
    \end{tikzpicture} 
    \caption{
    This figure depicts an example of the auxiliary graph $D_{\mat_1, \mat_2}(\set{1,2,3})$.
    Let $\mat_1$ and $\mat_2$ be matroids with the same ground set $\set{1, \dots, 7}$ that defined by 
    five bases $\set{1,2,3,4}$, $\set{1,2,3,5}$, $\set{1,3,5,6}$, $\set{1,2,5,6}$, $\set{1,2,5,7}$ and
    six bases $\set{1,2,3,6}$, $\set{1,2,3,7}$, $\set{1,2,5,6}$, $\set{1,3,5,6}$, $\set{1,2,5,7}$, $\set{2,3,4,6}$, respectively.
    In this example, $D_{\mat_1, \mat_2}(\set{1,2,3})$ has a directed $s$-$t$ path $P = (s, 5, 2, 6, t)$ without shortcuts and
    $\set{1,3,5,6}$ is a common independent set of $\mat_1$ and $\mat_2$.}
    \label{fig:opt}
\end{figure}

Let $P$ be a directed path from $s$ to $t$ in $D(I)$ without shortcuts. 
By the definition of $D(I)$, $\size{V(P) \cap I}$ is one less than $\size{V(P) \setminus I}$.
Moreover, we can prove that $I \xor (V(P) \setminus \set{s, t})$ is a common independent set of $\mat_1$ and $\mat_2$~\cite{Lawler1975}, meaning that the common independent set $I \xor (V(P) \setminus\set{s, t})$ of $\mat_1$ and $\mat_2$ is strictly larger than $I$.
It is easy to see that $D(I)$ has a directed path from $s$ to $t$ without shortcuts if and only if $D(I)$ has a directed path from $s$ to $t$.
The following lemma summarizes the above discussion and also proves that the converse direction also holds.

\begin{lemma}[Corollary 3.2 in~\cite{Lawler1975}]\label{lem:opt}
    Let $I$ be a common independent set of two matroids $\mat_1$ and $\mat_2$ and $D(I)$ be the auxiliary directed graph.
    Then, $I$ is a maximum common independent set of $\mat_1$ and $\mat_2$ if and only if $D(I)$ has no directed $s$-$t$ path.
\end{lemma}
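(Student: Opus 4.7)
The plan is to prove both directions separately: the ``only if'' direction via a classical augmenting-path construction that produces a strictly larger common independent set, and the ``if'' direction via the matroid intersection min-max inequality, certified by a reachability cut in $D(I)$.

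For the contrapositive of the ``only if'' direction, assume $D(I)$ contains a directed $s$-$t$ path, and take a shortest such path $P = (s, f_0, e_1, f_1, \ldots, e_k, f_k, t)$, which is automatically without shortcuts. I would show that $I' := I \xor (V(P) \setminus \{s,t\})$ is a common independent set of cardinality $|I|+1$; the cardinality follows immediately since $P$ visits $k+1$ elements of $S \setminus I$ and $k$ elements of $I$. Independence of $I'$ in $\mat_1$ is forced by the $\mat_1$-arcs along $P$, namely $(s,f_0)\in A_3$ and $(e_i,f_i)\in A_1$, via the standard exchange lemma for matroids: the no-shortcut condition guarantees that $\{e_i \mapsto f_i\}_{i=1}^{k}$ is the unique perfect matching in the bipartite $\mat_1$-exchange graph on $\{e_1,\ldots,e_k\}$ versus $\{f_1,\ldots,f_k\}$, so swapping these pairs (together with the free addition of $f_0$ provided by $(s,f_0) \in A_3$) preserves independence. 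Independence of $I'$ in $\mat_2$ is entirely symmetric, using the $A_2$-arcs $(f_{i-1},e_i)$ and the $A_4$-arc $(f_k,t)$. The resulting $I'$ contradicts maximality of $I$.

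For the ``if'' direction, assume $D(I)$ has no $s$-$t$ path and let $U \subseteq S$ be the set of elements reachable from $s$ in $D(I)$; since $t \notin U$, $U$ witnesses a cut. The plan is to prove $r_1(S \setminus U) + r_2(U) = |I|$, which combined with the bound $|J| \le r_1(S \setminus U) + r_2(U)$ for every common independent set $J$ (obtained by splitting $J$ along $U$) shows $I$ is maximum. The easy half $|I \setminus U| \le r_1(S \setminus U)$ and $|I \cap U| \le r_2(U)$ holds because $I$ is independent in both matroids, so the substance is the reverse inequalities.

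The crux is $r_1(S \setminus U) \le |I \setminus U|$ (the $\mat_2$-side is dual). Suppose for contradiction there is $f \in (S \setminus U) \setminus I$ with $(I \setminus U) \cup \{f\} \in \mathcal I_1$. If $I \cup \{f\} \in \mathcal I_1$, then $(s,f) \in A_3$ would put $f \in U$, a contradiction. Otherwise $I \cup \{f\}$ contains a unique $\mat_1$-circuit $C$ with $f \in C$, and for every $e \in C \setminus \{f\}$ one has $(I \cup \{f\}) \setminus \{e\} \in \mathcal I_1$, giving an arc $(e,f) \in A_1$; reachability from $s$ to any such $e$ would again put $f \in U$. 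Hence $C \setminus \{f\} \subseteq I \setminus U$, so $C \subseteq (I \setminus U) \cup \{f\}$, contradicting independence of the latter in $\mat_1$. The $\mat_2$-side argument is completely analogous, using $A_2$-arcs $(f,e)$ and the $A_4$-arc into the sink $t$. The main obstacle I expect is precisely this ``closure under arcs'' bookkeeping in the reverse rank inequality: extracting the correct fundamental $\mat_1$-circuit inside $I \cup \{f\}$ and verifying that every possible way for an element to escape $U$ is blocked by the corresponding arc type in $D(I)$.
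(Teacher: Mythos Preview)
The paper does not give its own proof of this lemma: it is stated as a citation (Corollary~3.2 in Lawler~1975), and only the forward direction is sketched informally in the paragraph preceding the statement. There is therefore nothing to compare your argument against in the paper itself.

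Your proposal is the standard textbook proof of the matroid-intersection augmenting-path theorem and is correct in both directions. For the augmenting direction, your shortest-path/no-shortcut argument via the unique-perfect-matching exchange lemma is exactly the classical route; the one place worth tightening in a write-up is the ``free addition of $f_0$'' remark: make explicit that each $f_j$ with $j\ge 1$ lies in the $\mat_1$-closure of $I$ (since $I\cup\{f_j\}\notin\mathcal I_1$), so the closure of $(I\setminus\{e_1,\dots,e_k\})\cup\{f_1,\dots,f_k\}$ is contained in the closure of $I$, which does not contain $f_0$ because $(s,f_0)\in A_3$. For the converse, your reachability-cut argument establishing $r_1(S\setminus U)+r_2(U)=|I|$ is the standard min--max certificate, and your circuit-containment bookkeeping for both rank equalities is correct.
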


Such a path $P$ in $D(I)$ is called an \emph{augmenting path}.
\Cref{lem:opt} is helpful to design an algorithm for enumerating all large maximal common independent sets in two matroids.

\section{Enumeration of maximum common independent sets}\label{sec:maximum}
We first consider the problem of enumerating all \emph{maximum} common independent sets of two matroids, which is indeed a special case of \textsc{Large Maximal Common Independent Set Enumeration}, where $\tau = {\rm opt}$.\footnote{By ${\rm opt}$, we mean the maximum cardinality of a common independent set of $\mat_1$ and $\mat_2$.}
It is known that this problem can be solved in amortized polynomial time using the algorithm in \cite{FUKUDA1995231}.
However, they did not explicitly give a delay bound of this algorithm.
In order to show an explicit delay bound, we give a polynomial-delay algorithm for \textsc{Maximum Common Independent Set Enumeration}, using a simple flashlight search technique (also known as binary partition and backtracking)~\cite{Tarjan:1975,Birmele:SODA:2013}.

In this technique, the algorithm enumerates solutions with a simple branching algorithm.
Let $S$ be a set and let $\mathcal S \subseteq 2^S$ be the set of solutions.
We choose an element $e \in S$ and branch into two cases: the first branch enumerates all solutions that include $e$; the second branch enumerates all solutions that exclude $e$.
By recursively calling this branching algorithm for unchosen elements, the algorithm enumerates all solutions in $\mathcal S$ without duplicates.
However, to show an upper bound of the delay of this algorithm, we need to prune branches that do not generate any solutions.
To this end, it suffices to solve a certain decision problem, called an \emph{extension problem}.
To enumerate maximum common independent sets of matroids, we define \textsc{Maximum Common Independent Set Extension} as follows.

\begin{definition}
    Given two matroids $\mat_1 = (S, \mathcal I_1)$ and $\mat_2 = (S, \mathcal I_2)$, and two disjoint subsets $\mathit{In}, \mathit{Ex} \subseteq S$,
    \textsc{Maximum Common Independent Set Extension} asks to \emph{find} a maximum common independent set $I$ of $\mat_1$ and $\mat_2$
    that satisfies $\mathit{In} \subseteq I$ and $\mathit{Ex} \cap I = \emptyset$.
\end{definition}

In what follows, we call conditions $\mathit{In} \subseteq I$ and $\mathit{Ex} \cap I = \emptyset$ the \emph{inclusion condition} and \emph{exclusion condition}, respectively.
Note that for any matroid $\mat$, $\mat\setminus \mathit{Ex}$ and $M \cont \mathit{In}$ are matroids.
The following proposition is straightforward but essential for solving the extension problem.

\begin{proposition}
    Let $\mat_1 = (S, \mathcal I_1)$ and $\mat_2 = (S, \mathcal I_2)$ be two matroids, and $\mathit{In}$ and $\mathit{Ex}$ be disjoint subsets of $S$.
    Suppose that $\mathit{In}$ is a common independent set of $\mat_1$ and $\mat_2$.
    Let $\mat'_1 = (\mat_1 \cont \mathit{In}) \setminus \mathit{Ex}$ and $\mat'_2 = (\mat_2 \cont \mathit{In}) \setminus \mathit{Ex}$.
    Then, there is a maximum common independent set $I$ of $\mat_1$ and $\mat_2$ that satisfies $\mathit{In} \subseteq I$ and $\mathit{Ex} \cap I = \emptyset$ if and only if there is a common independent set $I'$ of $\mat'_1$ and $\mat'_2$ with the cardinality $\size{I} - \size{\mathit{In}}$.    
\end{proposition}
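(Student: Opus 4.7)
The argument is a direct consequence of the definitions of matroid contraction and deletion. The plan is to establish a cardinality-preserving bijection
\[
I \;\longleftrightarrow\; I \setminus \mathit{In}
\]
between, on one hand, the common independent sets of $\mat_1$ and $\mat_2$ that contain $\mathit{In}$ and are disjoint from $\mathit{Ex}$, and, on the other hand, the common independent sets of $\mat'_1$ and $\mat'_2$. Since $\mathit{In}$ is by hypothesis a common independent set of $\mat_1$ and $\mat_2$, the definition of contraction gives, for each $j \in \set{1,2}$, the equivalence: for any $J \subseteq S \setminus \mathit{In}$, the set $J$ is independent in $\mat_j \cont \mathit{In}$ if and only if $J \cup \mathit{In}$ is independent in $\mat_j$. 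Deleting $\mathit{Ex}$ then merely restricts the ground set of $\mat_j \cont \mathit{In}$ to $S \setminus (\mathit{In} \cup \mathit{Ex})$, without affecting the independence of subsets that already avoid $\mathit{Ex}$. Combining these observations, $I \setminus \mathit{In}$ is a common independent set of $\mat'_1, \mat'_2$ if and only if $I$ is a common independent set of $\mat_1, \mat_2$ containing $\mathit{In}$ and disjoint from $\mathit{Ex}$; moreover $\size{I \setminus \mathit{In}} = \size{I} - \size{\mathit{In}}$ since $\mathit{In} \subseteq I$.

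With the bijection in hand, both directions are immediate. For the forward direction, given a maximum common independent set $I$ of $\mat_1$ and $\mat_2$ satisfying the two constraints, the set $I' \coloneqq I \setminus \mathit{In}$ is a common independent set of $\mat'_1, \mat'_2$ of cardinality $\size{I} - \size{\mathit{In}}$. For the converse, given a common independent set $I'$ of $\mat'_1, \mat'_2$ of the stated cardinality, $I' \cup \mathit{In}$ is a common independent set of $\mat_1, \mat_2$ satisfying the two constraints and of cardinality $\size{I}$; since $\size{I}$ is by assumption the maximum achievable under those constraints, $I' \cup \mathit{In}$ itself realises that maximum. There is no substantive obstacle here; the only thing to check is that $I \setminus \mathit{In}$ lies in the ground set $S \setminus (\mathit{In} \cup \mathit{Ex})$ of $\mat'_1$ and $\mat'_2$, which is immediate from $\mathit{In} \subseteq I$ and $I \cap \mathit{Ex} = \emptyset$.
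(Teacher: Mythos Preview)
Your proof is correct and is precisely the standard argument via contraction and deletion. The paper itself does not supply a proof of this proposition---it calls it straightforward and proceeds directly to its algorithmic consequence---so your bijection $I \leftrightarrow I \setminus \mathit{In}$ is exactly what the authors have in mind.
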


By the above proposition, we can solve \textsc{Maximum Common Independent Set Extension} in polynomial time by using a polynomial-time algorithm for finding a maximum common independent set of two matroids~\cite{Lawler1975}.
Note that by using oracles for $\mat_1$ and $\mat_2$, we can check whether a subset of $S$ is independent in $\mat'_1$ and in $\mat'_2$ in time $\order{n + Q}$ and space $\order{n + \hat{Q}}$.

Now, we design a simple flashlight search algorithm, which is sketched as follows.
Let $\mathcal S(\mathit{In}, \mathit{Ex})$ be the set of maximum common independent sets of $\mat_1$ and $\mat_2$ that satisfy both the inclusion and exclusion conditions.
Clearly, the set of all maximum common independent sets of $\mat_1$ and $\mat_2$ corresponds to $\mathcal S(\emptyset, \emptyset)$.
By solving the extension problem, we can determine whether $\mathcal S(\mathit{In}, \mathit{Ex})$ is empty or not in polynomial time.
Moreover, for an element $e \in S \setminus (\mathit{In}\cup\mathit{Ex})$,
$\set{\mathcal S(\mathit{In} \cup \set{e}, \mathit{Ex}), \mathcal S(\mathit{In}, \mathit{Ex} \cup \set{e})}$ is a partition of $\mathcal S(\mathit{In}, \mathit{Ex})$.
Thus, we can enumerate all maximum common independent sets in $\mathcal S(\mathit{In}, \mathit{Ex})$ by recursively enumerating $\mathcal S(\mathit{In} \cup \set{e}, \mathit{Ex})$ and
$\mathcal S(\mathit{In}, \mathit{Ex} \cup \set{e})$.
We give a pseudo-code of our algorithm in \Cref{algo:maximum}.
Finally, we consider the delay of this algorithm. 
Let $\mathcal T$ be a recursion tree defined by the execution of \Cref{algo:maximum}.
As we output a maximum common independent set of $\mat_1$ and $\mat_2$ at each leaf node in $\mathcal T$, the delay of the algorithm is upper bounded by the ``distance'' of two leaf nodes times the running time required to processing each node in $\mathcal T$.  
The distance between the root and a leaf node of $\mathcal T$ is at most $n$ and thus, the distance between two leaf nodes in $\mathcal{T}$ is upper bounded by linear in $n$. 
The time complexity of each node in $\mathcal T$ is in $\order{\poly(n + Q)}$. 
Hence, the delay of \Cref{algo:maximum} is upper bounded by a polynomial in $n + Q$. 
More specifically, by using an $\order{{\rm opt}^{3/2}nQ}$-time and $\order{n^2 + \hat Q}$-space algorithm for finding a maximum common independent set of two matroids~\cite{doi:10.1137/0215066}, the following theorem follows.

\begin{theorem}\label{thm:maximum}
    We can enumerate all maximum common independent sets of $\mat_1$ and $\mat_2$ in 
    $\order{{\rm opt}^{3/2}n^2Q}$ delay and $\order{n^2 + \hat Q}$ space. 
\end{theorem}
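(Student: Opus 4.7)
The plan is to instantiate the flashlight search sketched in the prose above and analyze its delay and space using the cited fast matroid intersection routine. The algorithm maintains two disjoint sets $\mathit{In}, \mathit{Ex} \subseteq S$ representing, respectively, the elements already committed to lie in the output and those forbidden from it. At each recursive node, the algorithm first invokes the extension oracle for $\mathcal S(\mathit{In},\mathit{Ex})$ and prunes if it returns empty; otherwise, if $\mathit{In}\cup\mathit{Ex}=S$ then $\mathit{In}$ itself is a maximum common independent set and is output, and otherwise the algorithm picks an arbitrary element $e\in S\setminus(\mathit{In}\cup\mathit{Ex})$ and recurses on $(\mathit{In}\cup\{e\},\mathit{Ex})$ and on $(\mathit{In},\mathit{Ex}\cup\{e\})$. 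The top level call is with $\mathit{In}=\mathit{Ex}=\emptyset$. Correctness (every maximum common independent set is produced exactly once) is immediate from the fact that $\mathcal S(\mathit{In},\mathit{Ex})$ is partitioned by the two children and that the extension oracle rejects precisely the empty sub-instances.

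Next I will justify the cost of one node. By the proposition preceding the theorem, the extension question for $(\mathit{In},\mathit{Ex})$ reduces to finding a maximum common independent set of the two matroids $\mat'_i=(\mat_i\cont \mathit{In})\setminus \mathit{Ex}$, and then checking whether its cardinality equals ${\rm opt}-\size{\mathit{In}}$, where ${\rm opt}$ is computed once at the start of the algorithm using the same subroutine. Independence oracles for $\mat'_1,\mat'_2$ are simulated from those for $\mat_1,\mat_2$ in time $\order{n+Q}$ and space $\order{n+\hat Q}$, so one call to the Cunningham algorithm of~\cite{doi:10.1137/0215066} costs $\order{{\rm opt}^{3/2}n Q}$ time and $\order{n^2+\hat Q}$ space per node.

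For the delay, I will use the standard charging argument for flashlight search. The recursion tree has depth at most $n$, because each recursive call strictly enlarges $\mathit{In}\cup\mathit{Ex}$, and each internal node has exactly two children, both of which (after pruning) contain at least one leaf in their subtree. Therefore, between two consecutively produced outputs the algorithm traverses at most $2n$ edges of the tree, each edge incurring one extension test of cost $\order{{\rm opt}^{3/2}nQ}$. Multiplying yields the advertised delay $\order{{\rm opt}^{3/2}n^2 Q}$. For space, the recursion stack has depth $\order{n}$, each frame stores $(\mathit{In},\mathit{Ex})$ in $\order{n}$ bits, and the transient workspace of the extension subroutine is $\order{n^2+\hat Q}$, giving $\order{n^2+\hat Q}$ overall.

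The only step that needs genuine care is ensuring that the pruning really is tight, i.e., that every non-pruned recursive call reaches some leaf that outputs. This is precisely what the extension oracle guarantees: it returns ``yes'' iff $\mathcal S(\mathit{In},\mathit{Ex})\neq\emptyset$, so after pruning, every subtree contains at least one solution and the depth-$n$ bound on the path between two consecutive outputs holds. The rest is the routine verification that the simulated oracles for $\mat'_1,\mat'_2$ stay within the claimed time and space bounds, which follows from standard matroid contraction/deletion identities.
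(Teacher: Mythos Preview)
Your proposal is correct and follows essentially the same approach as the paper: the flashlight/binary-partition search over $(\mathit{In},\mathit{Ex})$ with pruning via the extension problem, solved by running Cunningham's $\order{{\rm opt}^{3/2}nQ}$ matroid intersection algorithm on $(\mat_i\cont\mathit{In})\setminus\mathit{Ex}$, and the delay bound obtained from the depth-$n$ recursion tree. The only minor imprecision is the claim that every internal node has ``exactly two children'' after pruning---one branch may be empty---but this does not affect the $\order{n}$ bound on the path length between consecutive outputs, which is what the delay analysis actually uses.
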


\DontPrintSemicolon
\begin{algorithm}[t]
    \caption{A polynomial-delay and polynomial-space algorithm for enumerating all maximum common independent sets in two matroids.}
    \label{algo:maximum}
    \Procedure{\Maximum{$\mat_1 = (S, \mathcal I_1), \mat_2 = (S, \mathcal I_2)$}}{
        \RecMaximum{$\mat_1, \mat_2, \emptyset, \emptyset$}\;
    }
    \Procedure{\RecMaximum{$\mat_1, \mat_2, \mathit{In}, \mathit{Ex}$}}{
        \lIf{$\mathit{In} \cup \mathit{Ex} = S$}{
            Output $\mathit{In}$, \Return
        }
        Choose an arbitrary $e \in S \setminus (\mathit{In}\cup\mathit{Ex})$\;
        \If{there is a maximum common independent set $I'$ of $\mat_1$ and $\mat_2$ that satisfies both $(\mathit{In} \cup \set{e}) \subseteq I'$ and $\mathit{Ex} \cap I' = \emptyset$}{
            \RecMaximum{$\mat_1, \mat_2, \mathit{In} \cup \set{e}, \mathit{Ex}$}
        }
        \If{there is a maximum common independent set $I'$ of $\mat_1$ and $\mat_2$ that satisfies both $\mathit{In} \subseteq I'$ and $(\mathit{Ex} \cup \set{e}) \cap I' = \emptyset$}{
            \RecMaximum{$\mat_1, \mat_2, \mathit{In}, \mathit{Ex} \cup \set{e}$}
        }        
    }
\end{algorithm}

\section{Enumeration of large maximal common independent sets}\label{sec:large}
We propose a polynomial-delay and polynomial-space algorithm for enumerating maximal common independent sets of two matroids $\mat_1$ and $\mat_2$ with the cardinality at least $\tau$, namely \textsc{Large Maximal Common Independent Set Enumeration}.
From \Cref{thm:maximum}, if $\tau = {\rm opt}$, we can enumerate all maximum common independent sets of $\mat_1$ and $\mat_2$ in polynomial delay and polynomial space. 
Thus, in this section, we assume that $\tau < {\rm opt}$.

Our proposed algorithm is based on \emph{reverse search}~\cite{Avis::1996}, which is one of the frequently used techniques to design efficient enumeration
algorithms~\cite{DBLP:conf/wads/KanteLMNU15,DBLP:conf/swat/MakinoU04,DBLP:journals/siamcomp/TsukiyamaIAS77,Kobayashi:WG:2022,DBLP:conf/mfcs/KuritaK20}.
One may expect that a flashlight search algorithm similar to that described in the previous section could be designed for \textsc{Large Maximal Common Independent Set Enumeration}, because finding a \emph{maximal} solution is usually easier than finding a \emph{maximum} solution. 
However, this intuitive phenomenon does not hold for extension problems.
In particular, the problem of finding a maximal matching in a bipartite graph that satisfies an exclusion condition is \NP-complete~\cite{Kobayashi:WG:2022,DBLP:conf/fct/CaselFGMS19}.
As the set of all matchings in a bipartite graph can be described as the set of common independent sets of matroids, the extension problem for \textsc{Large Maximal Common Independent Set Enumeration} is also \NP-complete.

Before delving into our algorithm, we briefly sketch an overview of the reverse search technique. 
Let $\mathcal S$ be the set of solutions.
In the reverse search technique, we define a set of ``special solutions'' $\mathcal R \subseteq \mathcal S$ and a rooted forest (i.e., a set of rooted trees) on $\mathcal S$ such that $\mathcal R$ is the set of its roots.
Suppose that we can enumerate $\mathcal R$ efficiently.
Then, we can enumerate all solutions in $\mathcal S$ by solely traversing the rooted forest from each root solution in $\mathcal R$.
To this end, for a non-root solution $X$ in $\mathcal S \setminus \mathcal R$, it suffices to define its parent $\parent{X}$ in an appropriate manner.
More specifically, to define a rooted forest on $\mathcal S$, this parent-child relation must have no cycles.
Moreover, to traverse this rooted forest, we need to efficiently enumerate the children of each internal node in the rooted forest.

Now, we turn back to our problem.
In the following, we may simply refer to maximal common independent sets of $\mat_1$ and $\mat_2$ with cardinality at least $\tau$ as \emph{solutions}.
We define the set of maximum common independent sets of $\mat_1$ and $\mat_2$ as the root solutions $\mathcal R$.
We can efficiently enumerate $\mathcal R$ by \Cref{thm:maximum}.
To define the parent of a solution not in $\mathcal R$, fix an arbitrary maximum common independent set $R$ of $\mat_1$ and $\mat_2$.
Let $I$ be a maximal common independent set of $\mat_1$ and $\mat_2$ with $\size{I} < \size{R}$.
We consider two matroids $\mat_1(R, I) \coloneqq (\mat_1 \rest(R \cup I))\cont (R \cap I)$ and 
$\mat_2(R, I) \coloneqq (\mat_2 \rest(R \cup I))\cont (R \cap I)$ as well as the auxiliary directed graph $D(R, I) \coloneqq D_{\mat_1(R, I), \mat_2(R, I)}(I\setminus R)$.
Let us note that the vertex set of $D(R, I)$ is $(R \xor I) \cup \set{s,t}$.
Since $R$ and $I$ are independent in both $\mat_1$ and $\mat_2$, by~\Cref{prop:union_intersection}, $R \setminus I$ and $I \setminus R$ are independent in $\mat_1(R, I)$ and $\mat_2(R, I)$ as well.
Moreover, as $|R| > |I|$, we have $|R \setminus I| > |I \setminus R|$.
Thus, by \Cref{lem:opt}, $D(R, I)$ has a directed $s$-$t$ path. 
Let $P$ be a directed $s$-$t$ path $(v_1 = s, v_2, \dots, v_{2k+1}=t)$ in $D(R, I)$ without shortcuts.
We first show that $I \xor \set{v_2, v_3}$ is a common independent set of $\mat_1$ and $\mat_2$. 

\begin{lemma}\label{lem:common_independent}
    Let $P = (v_1 = s, \dots, v_{2k+1} = t)$ be a directed $s$-$t$ path without shortcuts in $D(R, I)$.
    Then $P$ has at least four vertices and $I \xor \set{v_2, v_3}$ is also a common independent set of $\mat_1$ and $\mat_2$.
\end{lemma}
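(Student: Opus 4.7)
The plan is to exploit the alternating structure of paths in $D(R,I)$ and translate the independence statements given by the arc definitions back into statements about $\mat_1$ and $\mat_2$ using \Cref{prop:union_intersection}. Recall that $D(R,I)$ is the auxiliary graph of $\mat_1(R,I)$ and $\mat_2(R,I)$ with respect to the independent set $I\setminus R$, so arcs of type $A_3$ leave $s$ into $R\setminus I$, arcs of type $A_2$ go from $R\setminus I$ to $I\setminus R$, arcs of type $A_1$ go from $I\setminus R$ to $R\setminus I$, and arcs of type $A_4$ enter $t$ from $R\setminus I$. Hence every directed $s$-$t$ path alternates $s\to (R\setminus I)\to (I\setminus R)\to (R\setminus I)\to\cdots\to (R\setminus I)\to t$, and in particular $v_2\in R\setminus I$ and, whenever $P$ has more than three vertices, $v_3\in I\setminus R$.

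First I would show that $P$ cannot consist of only three vertices. Suppose for contradiction that $P=(s,v_2,t)$. Then $(s,v_2)\in A_3$ and $(v_2,t)\in A_4$ give $(I\setminus R)\cup\set{v_2}\in\mathcal I_1(R,I)$ and $(I\setminus R)\cup\set{v_2}\in\mathcal I_2(R,I)$. Applying \Cref{prop:union_intersection} to both matroids (with the common ``current'' independent set $I\setminus R$ in each $\mat_i(R,I)=(\mat_i\rest(R\cup I))\cont(R\cap I)$), this translates to $I\cup\set{v_2}$ being a common independent set of $\mat_1$ and $\mat_2$, which contradicts the maximality of $I$. Therefore $P$ has at least five vertices, which in particular implies at least four.

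Next I would establish that $I\xor\set{v_2,v_3}=(I\cup\set{v_2})\setminus\set{v_3}$ lies in $\mathcal I_1\cap\mathcal I_2$. For $\mat_1$, the arc $(s,v_2)\in A_3$ gives $(I\setminus R)\cup\set{v_2}\in\mathcal I_1(R,I)$, which by \Cref{prop:union_intersection} means $I\cup\set{v_2}\in\mathcal I_1$; since $v_3\in I$, the subset $(I\cup\set{v_2})\setminus\set{v_3}=I\xor\set{v_2,v_3}$ is independent in $\mat_1$ by the hereditary axiom. For $\mat_2$, the arc $(v_2,v_3)\in A_2$ says precisely that $((I\setminus R)\cup\set{v_2})\setminus\set{v_3}\in\mathcal I_2(R,I)$; adding back the contracted set $R\cap I$ and invoking \Cref{prop:union_intersection} once more, this yields $I\xor\set{v_2,v_3}\in\mathcal I_2$.

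I do not expect a real obstacle here; the only point requiring care is bookkeeping between the ``ambient'' matroids $\mat_1,\mat_2$ and the restricted-contracted matroids $\mat_i(R,I)$ that underlie $D(R,I)$, together with the fact that the auxiliary graph is built with respect to $I\setminus R$ rather than $I$. \Cref{prop:union_intersection} is exactly the translation device that makes this interchange routine, and it must be applied consistently both for the maximality argument and for the two independence checks.
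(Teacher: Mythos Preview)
Your proposal is correct and follows essentially the same approach as the paper: use the arc definitions of $D(R,I)$ together with \Cref{prop:union_intersection} to lift independence in $\mat_i(R,I)$ to independence in $\mat_i$, and rule out the length-three path by a maximality contradiction. The only cosmetic difference is that the paper phrases the length-three contradiction via \Cref{prop:maximal} (maximality of $I\setminus R$ in $\mat_1(R,I),\mat_2(R,I)$), whereas you translate back to $\mat_1,\mat_2$ and invoke maximality of $I$ directly; these are equivalent, and your observation that the path then has at least five (hence at least four) vertices is a harmless strengthening.
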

\begin{proof}
    We first show that $P$ has at least four vertices.
    As $s$ is not adjacent to $t$ in $D(R, I)$, $P$ has at least three vertices.
    If $P = (v_1 = s, v_2, v_3 = t)$, then $(I \setminus R) \cup \set{v_2}$ is a common independent set of $\mat_1(R, I)$ and $\mat_2(R, I)$.
    However, by~\Cref{prop:maximal}, $I \setminus R$ is a maximal common independent set of $\mat_1(R, I)$ and $\mat_2(R, I)$, a contradiction.

    We next show that $I \xor \set{v_2, v_3}$ is also a common independent set of $\mat_1$ and $\mat_2$.
    By the definition of $D(R, I)$, $v_2 \in R \setminus I$ and $v_3 \in I \setminus R$.
    This implies that $v_2 \notin I$ and $v_3 \in I$.
    Moreover, since $D(R, I)$ has arcs $(s, v_2)$ and $(v_2, v_3)$,
    $(I \setminus R) \cup \set{v_2}$ is independent in $\mat_1(R, I)$ and $(I \setminus R) \xor \set{v_2, v_3}$ is independent in $\mat_2(R, I)$.
    By~\Cref{prop:union_intersection}, $I \cup \set{v_2}$ and $I \xor \set{v_2, v_3}$ are independent in $\mat_1$ and in $\mat_2$, respectively.
    As $I \xor \set{v_2, v_3} = (I \setminus \set{v_3}) \cup \{v_2\}$ is a subset of $I \cup \set{v_2}$, $I\xor \set{v_2, v_3}$ is a common independent set of $\mat_1$ and $\mat_2$.
\end{proof}

We define the parent $\parent{I}$ of $I$ as follows.
To ensure the consistency of defining its parent, we choose a path $P$ from $s$ to $t$ without shortcuts in $D(R, I)$ in a certain way, and hence the path $P$ is determined solely by the pair $R$ and $I$.
We define the parent of $I$ (under $R$) as $\comp{I \xor \set{v_2, v_3}}$, where $\comp{X}$ is an arbitrary maximal common independent set of $\mat_1$ and $\mat_2$ containing $X$ for a common independent set $X$ of $\mat_1$ and $\mat_2$.
Similarly, we choose a maximal common independent set $\comp{X}$ in a certain way, and hence $\comp{X}$ is determined solely by $X$.
Therefore, by \Cref{lem:common_independent,lem:opt},
for a maximal common independent set $I$ of $\mat_1$ and $\mat_2$ with $\size{I} < \size{R}$, the parent of $I$ (under $R$) is uniquely determined and we denote it as $\parent{I}$.
In the following, we claim that this parent-child relation defines a rooted forest on the solutions whose roots belong to $\mathcal R$.
A key to this is a certain ``monotonicity'', which will be proven in \Cref{lem:card}: For any solution $I$ with $\size{I} < \size{R}$, it holds that $\size{R \xor I} > \size{R \xor \parent{I}}$.
Given these, from any solution $I$ with $|I| < |R|$, we can ``reach'' a maximum common independent set of $\mat_1$ and $\mat_2$ (not necessarily to be $R$) by iteratively taking its parent at most $n$ times as $\size{R \xor I} \le n$.
To show this monotonicity, we give the following technical lemma, whose proof is deferred to the end of this section.

\begin{restatable}{lem}{pone}\label{lem:+1}
    Let $I$ be a maximal common independent set of $\mat_1$ and $\mat_2$ with $\size{I} < \size{R}$ and $e \in I$ and $f \in S\setminus I$.
    If $D(I)$ has two arcs $(s, f)$ and $(f, e)$, then $I \xor \set{e, f}$ is a common independent set of $\mat_1$ and $\mat_2$.
    Moreover, $\size{\comp{I \xor \set{e, f}}} \le \size{I} + 1$.
\end{restatable}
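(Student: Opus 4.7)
The first claim is immediate from unpacking the definitions of the arc sets $A_2$ and $A_3$ given in \Cref{sec:prelim}. The arc $(s,f) \in A_3$ gives $I \cup \set{f} \in \mathcal I_1$, so $I \xor \set{e,f} = (I \cup \set{f}) \setminus \set{e} \in \mathcal I_1$ by downward closure. The arc $(f,e) \in A_2$ states directly that $(I \cup \set{f}) \setminus \set{e} = I \xor \set{e,f} \in \mathcal I_2$. Hence $I \xor \set{e,f}$ is a common independent set of $\mat_1$ and $\mat_2$. I will use the ``side information'' carried by the arc $(f,e) \in A_2$ that $I \cup \set{f} \notin \mathcal I_2$ crucially in the second part.

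For the size bound, let $I' = I \xor \set{e,f}$ and suppose for contradiction that some common independent set $J$ with $I' \subseteq J$ has $\size{J} \ge \size{I} + 2$. By downward closure in $\mathcal I_1 \cap \mathcal I_2$ we may take $\size{J} = \size{I} + 2$. I split on whether $e \in J$. If $e \in J$, then $J \supseteq I' \cup \set{e} = I \cup \set{f}$, so by downward closure $I \cup \set{f} \in \mathcal I_2$, contradicting the side condition above.

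Otherwise $e \notin J$, so $J \cap I = I \setminus \set{e}$ and $J = (I \setminus \set{e}) \cup \set{f, g_1, g_2}$ for distinct $g_1, g_2 \in S \setminus (I \cup \set{f})$. I invoke the augmentation axiom in each matroid. For $\mat_1$: since $I \cup \set{f}, J \in \mathcal I_1$ with $\size{J} = \size{I \cup \set{f}} + 1$ and $J \setminus (I \cup \set{f}) = \set{g_1, g_2}$, some $g \in \set{g_1, g_2}$ satisfies $I \cup \set{f, g} \in \mathcal I_1$, hence $I \cup \set{g} \in \mathcal I_1$; relabel so that $g = g_1$. For $\mat_2$: iterated augmentation from $I \in \mathcal I_2$ inside $J$ yields $h_1, h_2 \in J \setminus I = \set{f, g_1, g_2}$ with $I \cup \set{h_1, h_2} \in \mathcal I_2$. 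Since $I \cup \set{f} \notin \mathcal I_2$, neither $h_i$ equals $f$, so $\set{h_1, h_2} = \set{g_1, g_2}$ and in particular $I \cup \set{g_1} \in \mathcal I_2$. Combined with $I \cup \set{g_1} \in \mathcal I_1$, the set $I \cup \set{g_1}$ is a common independent set strictly larger than $I$, contradicting the maximality of $I$.

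The only mild subtlety is ensuring that the \emph{same} element $g_1$ witnesses independence in both matroids; this works because $\mat_1$-augmentation requires only one of $g_1, g_2$ while $\mat_2$-augmentation forces both $g_1$ and $g_2$ individually to extend $I$ in $\mathcal I_2$. Note that the hypothesis $\size{I} < \size{R}$ plays no role in the argument; only the maximality of $I$ and the two arc conditions $(s,f) \in A_3$ and $(f,e) \in A_2$ are used.
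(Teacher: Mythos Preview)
Your proof is correct and takes a genuinely different, more elementary route than the paper's. The paper establishes three auxiliary lemmas (\Cref{lem:-,lem:+,lem:non}), each relying on the circuit elimination axiom, that partition $S \setminus I$ according to the arc-relation to $e$ in $D(I)$: elements of $N^-_{D(I)}(e) \setminus \set{f}$ cannot be added to $I \xor \set{e,f}$ (dependence in $\mat_2$), at most one element of $N^+_{D(I)}(e)$ can be added (dependence in $\mat_1$ if two are added), and nothing outside these neighborhoods can be added. You instead argue directly by contradiction with the exchange axiom alone: assuming $\size{J} = \size{I}+2$ for some common independent $J \supseteq I \xor \set{e,f}$, you augment $I \cup \set{f}$ inside $J$ in $\mat_1$ to pin down a $g_1$, then augment $I$ twice inside $J$ in $\mat_2$ and use $I \cup \set{f} \notin \mathcal I_2$ to force both of $g_1,g_2$ to extend $I$ in $\mat_2$; the overlap yields a common extension of $I$, contradicting maximality. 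Your argument is shorter and avoids circuits entirely. The paper's approach does yield the extra structural information that the single element possibly added by $\comp{\cdot}$ must lie in $N^+_{D(I)}(e)$, but this is not used anywhere else in the paper, so nothing is lost by your route.
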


Now, we prove the aforementioned ``monotonicity''.

\begin{lemma}\label{lem:card}
    Let $I$ be a maximal common independent set of $\mat_1$ and $\mat_2$ with $\size{I} < \size{R}$.
    Then, the following three properties on $\parent{I}$ are satisfied.
    \begin{itemize}
        \item $\size{I} \le \size{\parent{I}}$,
        \item $\size{R \xor I} > \size{R \xor \parent{I}}$, and
        \item $\size{I \xor \parent{I}} \le 3$.
    \end{itemize}
\end{lemma}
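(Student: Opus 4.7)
The plan is to reduce everything to the cardinality bound $\size{\parent{I}} \le \size{I}+1$, which I will obtain by applying \Cref{lem:+1} with the choice $(e,f) = (v_3, v_2)$; from this single inequality, all three properties will follow by direct counting.

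First I would verify the hypotheses of \Cref{lem:+1}. Since $v_2 \in R \setminus I$ and $v_3 \in I \setminus R$, the arcs $(s, v_2)$ and $(v_2, v_3)$ of the path $P$ in $D(R, I)$ respectively witness that $(I \setminus R) \cup \set{v_2}$ is independent in $\mat_1(R, I)$ and that $(I \setminus R) \xor \set{v_2, v_3}$ is independent in $\mat_2(R, I)$. Adding $R \cap I$ to each and invoking \Cref{prop:union_intersection} shows that $I \cup \set{v_2} \in \mathcal I_1$ and $I \xor \set{v_2, v_3} \in \mathcal I_2$, which is exactly saying that the two arcs $(s, v_2)$ and $(v_2, v_3)$ are both present in $D(I)$. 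Hence \Cref{lem:+1} applies and gives $\size{\parent{I}} = \size{\comp{I \xor \set{v_2, v_3}}} \le \size{I} + 1$.

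Writing $I' \coloneqq I \xor \set{v_2, v_3}$, we have $\size{I'} = \size{I}$ (since $v_2 \notin I$ and $v_3 \in I$), so $\parent{I} \supseteq I'$ immediately yields $\size{\parent{I}} \ge \size{I}$, proving property~(1). Combining with the upper bound, $\parent{I} = I' \cup X$ for some $X$ disjoint from $I'$ with $\size{X} \le 1$. For property~(3), note $I \setminus \parent{I} \subseteq I \setminus I' = \set{v_3}$ (because $I \cap I' \subseteq I' \subseteq \parent{I}$), and $\parent{I} \setminus I \subseteq (I' \setminus I) \cup X = \set{v_2} \cup X$, so $\size{I \xor \parent{I}} \le 1 + 2 = 3$.

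For property~(2), I would first compute $\size{R \xor I'} = \size{R \xor I} - 2$: removing $v_3 \in I \setminus R$ from $I$ shrinks $\size{I \setminus R}$ by one, while adding $v_2 \in R \setminus I$ shrinks $\size{R \setminus I}$ by one. Then I do a small case split on $X$: if $X = \emptyset$ the symmetric difference with $R$ is exactly $\size{R \xor I} - 2$; if $X = \set{x}$ with $x \in R$, it becomes $\size{R \xor I} - 3$; and if $X = \set{x}$ with $x \notin R$, it becomes $\size{R \xor I} - 1$. In every case $\size{R \xor \parent{I}} \le \size{R \xor I} - 1 < \size{R \xor I}$. The only subtle step is the translation of arcs between $D(R, I)$ and $D(I)$ so that \Cref{lem:+1} is applicable; everything else is counting.
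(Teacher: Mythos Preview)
Your proof is correct and follows essentially the same approach as the paper: translate the arcs $(s,v_2)$ and $(v_2,v_3)$ from $D(R,I)$ to $D(I)$ via \Cref{prop:union_intersection}, invoke \Cref{lem:+1} to get $\size{\parent{I}} \le \size{I}+1$, and then read off all three properties by counting. One minor omission: to conclude that the arc $(v_2,v_3)$ actually lies in $D(I)$ you also need $I \cup \set{v_2} \notin \mathcal I_2$, not just $I \xor \set{v_2,v_3} \in \mathcal I_2$; this follows immediately from the maximality of $I$ combined with $I \cup \set{v_2} \in \mathcal I_1$ (or, as the paper does, by transferring the dependence condition from $D(R,I)$ using the ``only if'' direction of \Cref{prop:union_intersection}).
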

\begin{proof}
    As $\size{R \setminus I} > \size{I \setminus R}$, by~\Cref{lem:opt}, there is a directed path $P = (v_1, \dots, v_{2k+1})$ from $s = v_1$ to $t = v_{2k + 1}$ without shortcuts in $D(R, I)$.
    By~\Cref{lem:common_independent}, $I' = I \xor \set{v_2, v_3}$ is a common independent set of $\mat_1$ and $\mat_2$.
    The first property $\size{I} \le \size{\parent{I}}$ follows from
    \begin{align*}
        \size{\parent{I}} = \size{\comp{I \xor \set{v_2, v_3}}} \ge \size{I \xor \set{v_2, v_3}} = \size{I},
    \end{align*}
    as $v_2 \notin I$ and $v_3 \in I$.
    Since $v_2 \in R \setminus I$ and $v_3 \in I \setminus R$, we have $\size{R \xor I'} = \size{R \xor I} - 2$.
    If $D(I)$ has arcs $(s, v_2)$ and $(v_2, v_3)$, by~\Cref{lem:+1}, $\size{\comp{I'}} \le \size{I} + 1$, which yields that
    \begin{align*}
        \size{R \xor \parent{I}} = \size{R \xor \comp{I'}} \le \size{R \xor I'} + 1 = \size{R \xor I} - 1,
    \end{align*}
    where the inequality $\size{R \xor \comp{I'}} \le \size{R \xor I'} + 1$ follows from $\size{\comp{I'}} \le \size{I} + 1 = \size{I'} + 1$, meaning that $\comp{I'} \setminus I'$ contains at most one element.
    This also shows that
    \begin{align*}
        \size{I \xor \parent{I}} = \size{I \xor \comp{I'}} \le \size{I \xor I'} + 1 \le \size{I \xor (I \xor \set{v_2, v_3})} + 1 = 3.
    \end{align*}
    
    Thus, it suffices to show that $D(I)$ has these arcs $(v_2, v_3)$ and $(s, v_2)$.
    Let $\mat_1(R, I) = (\mat_1 \rest (R \cup I)) \cont (R \cap I)$ and $\mat_2(R, I) = (\mat_2 \rest (R \cup I)) \cont (R \cap I)$.
    As $D(R, I)$ has the arc $(v_2, v_3)$, $(I \setminus R) \cup \set{v_2}$ and $((I \setminus R) \cup \set{v_2}) \setminus \set{v_3}$ are dependent and independent in $\mat_2(R, I)$, respectively.
    By~\Cref{prop:union_intersection}, 
    \begin{align*}
        &((I\setminus R) \cup \set{v_2}) \cup (R \cap I) = I \cup \set{v_2} \text{ and}\\
        &(((I \setminus R) \cup \set{v_2}) \setminus \set{v_3}) \cup (R \cap I) = (I \cup \set{v_2}) \setminus \set{v_3}
    \end{align*}
    are dependent and independent in $\mat_2$, respectively.
    This implies that $D(I)$ has an arc $(v_2, v_3)$.
    A similar argument for $(s, v_2)$ and $\mat_1(R, I)$ proves that $D(I)$ has arc $(s, v_2)$, completing the proof of this lemma.
\end{proof}

Now, we are ready to describe our algorithm, which is also shown in \Cref{algo:maximal}.
We assume that the size of a maximum common independent set of $\mat_1$ and $\mat_2$ is at least $\tau$ as otherwise we do nothing.
We first enumerate the set $\mathcal R$ all maximum common independent sets of $\mat_1$ and $\mat_2$.
This can be done in polynomial delay and polynomial space using the algorithm in \Cref{thm:maximum}.
We choose an arbitrary $R \in \mathcal R$ and for each $I \in \mathcal R$, we enumerate all solutions that belong to the component containing $I$ in the rooted forest defined by the parent-child relation.
This is done by calling \RecMaximal{$\mat_1, \mat_2, I, R, \tau$}.
The procedure \RecMaximal{$\mat_1, \mat_2, I, R, \tau$} recursively generates solutions $I'$ with $I = \parent{I'}$.
We would like to emphasize that the algorithm only generates solutions $I'$ with $\size{I'} \ge \tau$.

We first claim that all the solutions are generated by this algorithm.
To see this, consider an arbitrary solution $I$.
Define a value $v(I)$ as
\begin{align*}
    v(I) = \begin{dcases}
        0 & \text{if } \size{I} = \size{R}\\
        \size{I \xor R} & \text{otherwise}.
    \end{dcases}
\end{align*}
We prove the claim by induction on $v(I)$.
Suppose that $v(I) = 0$.
In this case, $I$ is a maximum common independent set of $\mat_1$ and $\mat_2$, as $\size{I \xor R} = 0$ if and only if $I = R$.
Then, $I$ is obviously generated as we call \RecMaximal{$\mat_1, \mat_2, I, R, \tau$} for all $I \in \mathcal R$.
Suppose that $v(I) > 0$.
Then, $I$ is a maximal common independent set of $\mat_1$ and $\mat_2$ with $\size{I} < \size{R}$.
We assume that all the solutions $I'$ with $v(I') = \size{R \xor I'} < \size{R \xor I}$ is generated by the algorithm.
By~\Cref{lem:card}, we have $\size{\parent{I}} \ge \size{I} \ge \tau$ and $\size{R \xor \parent{I}} < \size{R \xor I}$, which implies that $\parent{I}$ is generated by the algorithm.
By the definition of parent, we have $\parent{I} = \comp{I \xor \set{u, v}}$ and $\size{\parent{I} \xor I} \ge 2$ for some $u, v \in S$.
Moreover, by~\Cref{lem:card}, $\size{I \xor \parent{I}} \le 3$, the child $I$ of $\parent{I}$ is computed at line~\ref{line:comp:child}.
Thus, $I$ is generated by the algorithm as well.

We next claim that all the solutions are generated without duplication.
Since we only call \RecMaximal{$\mat_1, \mat_2, I', R, \tau$} for $I = \parent{I'}$, it holds that $v(I) < v(I')$.
As $v(I) \le n$ for every solution $I$, by the uniqueness of the parent of non-maximum solutions, the algorithm generates each solution exactly once. 
This concludes that the algorithm correctly enumerates all maximal common independent sets of $\mat_1$ and $\mat_2$ of cardinality at least $\tau$.

Finally, we discuss the running time of the algorithm.
We first enumerate all maximum common independent sets of $\mat_1$ and $\mat_2$.
This can be done in time $O(n^{7/2}Q)$ delay.
For each solution $I$, we compute $\parent{I}$ as follows.
We construct the graph $D(R, I)$ that has $n + 2$ nodes and $O(n^2)$ arcs.
This can be done by using $O(n^2Q)$ queries to the oracles for $\mat_1$ and $\mat_2$.
To find the path $P$ from $s$ to $t$ without shortcuts, we just compute a shortest path from $s$ to $t$, which can be done in $O(n^2)$ time.
Thus, we can compute $I \xor \{v_2, v_3\}$ in $O(n^2Q)$ time as well.
From $I \xor \{v_2, v_3\}$, $\comp{I \xor \{v_2, v_3\}}$ can be computed in $O(nQ)$ time.
Thus, we can compute $\parent{I}$ from $I$ in time $O(n^2Q)$.

For each call \RecMaximal{$\mat_1, \mat_2, I, R, \tau$}, we output exactly one solution.
Moreover, the running time of computing all children of $I$ is $O(n^5Q)$.
This can be seen as there are $O(n^3)$ candidates $I'$ of children and we can check in $O(n^2Q)$ time whether a candidate $I'$ is in fact a child of $I$.
Thus the delay of the algorithm is upper bounded by the time elapsed between two consecutive calls.
As the depth of the rooted forest defined by recursive calls is at most $n$, this can be upper bounded by $O(n^6Q)$. 
As for the space complexity, by~\Cref{thm:maximum}, we can enumerate all maximum common independent sets of $\mat_1$ and $\mat_2$ in $O(n^2 + \hat{Q})$ space.
In \RecMaximal, we need to store local variables $I$ and $X$ in each recursive call, which can be done in space $O(n)$.
As the depth of the rooted forest is at most $n$, the space usage for local variables is $\order{n^2}$ in total.
For each candidate $I'$, we can check in $\order{n^2 + \hat{Q}}$ whether $I'$ is a maximal common independent set of $\mat_1$ and $\mat_2$ and whether $I = \parent{I'}$.
Overall, we have the following theorem.

\begin{theorem}\label{thm:large}
    There is an $\order{n^6Q}$-delay and $\order{n^2 + \hat Q}$-space algorithm for enumerating maximal common independent sets in two matroids with the cardinality at least $\tau$.
\end{theorem}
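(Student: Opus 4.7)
The plan is to put together the ingredients that the paper has just developed into a reverse search procedure and then bound its resources. The root set is $\mathcal R$, the collection of all maximum common independent sets; by \Cref{thm:maximum} we can enumerate $\mathcal R$ in $\order{n^{7/2}Q}$ delay and $\order{n^2 + \hat Q}$ space (which is within the claimed budget). For every other solution $I$ (a maximal common independent set of cardinality in $[\tau, |R|-1]$) the parent $\parent{I}$ is defined relative to a fixed root $R$ via the auxiliary graph $D(R,I)$: take the canonical shortest $s$--$t$ path without shortcuts, read off the first two non-terminal vertices $v_2, v_3$, and let $\parent{I} = \comp{I \xor \{v_2,v_3\}}$. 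By \Cref{lem:common_independent} this is well-defined, and by \Cref{lem:card} it satisfies $|R \xor \parent{I}| < |R \xor I|$ and $|I \xor \parent{I}| \le 3$.

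Next, I would argue correctness of the enumeration in two steps. Completeness: induct on the quantity $v(I)$ defined to be $0$ if $|I|=|R|$ and $|R \xor I|$ otherwise. The base case is handled by enumerating $\mathcal R$. For the inductive step, the monotonicity in \Cref{lem:card} shows that $\parent{I}$ has strictly smaller $v$-value, is still of cardinality at least $|I| \ge \tau$ (hence a valid solution that is produced by the algorithm by induction), and that $I$ is among the $O(n^3)$ candidate sets of the form $\comp{(\parent{I} \xor X)}$ with $|X| \le 3$, so it will be produced at line~\ref{line:comp:child}. No-duplication follows from the uniqueness of the parent, since each non-root solution is visited exactly from $\parent{I}$.

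For the delay and space bound, I would fix the per-node work. Building $D(R,I)$ needs $O(n^2)$ oracle calls, a shortest path costs $O(n^2)$ additional time, and completing to a maximal set $\comp{\cdot}$ takes $O(nQ)$; so computing $\parent{I'}$ for a candidate $I'$ costs $O(n^2Q)$. The children routine iterates through $O(n^3)$ candidate children $I'$ of $I$ (indexed by $(X, Y)$ with $|X|+|Y|\le 3$), for each verifying maximality of $I'$ and the test $\parent{I'} = I$ in $O(n^2Q)$ time and $O(n^2 + \hat Q)$ space, giving $O(n^5Q)$ per recursive call. Standard alternating-output arguments for reverse search then give delay equal to (depth of the forest) $\times$ (per-node cost); since $v(I) \le n$ bounds the depth by $n$, the delay is $O(n^6Q)$. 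Space usage is $O(n)$ per recursion level for the local variables $I$ and the currently considered candidate, with depth $n$, plus the $O(n^2 + \hat Q)$ needed by the root enumerator and by each candidate check, for a total of $O(n^2 + \hat Q)$.

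The main obstacle I expect is not any single step but rather the care needed in the children routine: one must traverse all candidates $I'$ at Hamming distance at most $3$ from $I$ that are themselves maximal solutions of cardinality at least $\tau$, and for each one recompute $\parent{I'}$ from scratch (using the fixed canonical shortest-path rule in $D(R, I')$) and compare with $I$. Getting the canonicity right — a deterministic choice of $P$ and of $\comp{\cdot}$ depending only on the input, not on the traversal history — is what makes the parent well-defined and thus what turns the acyclicity guaranteed by \Cref{lem:card} into an actual rooted forest that can be traversed in polynomial delay.
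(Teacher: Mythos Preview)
Your proposal is correct and follows essentially the same approach as the paper: enumerate the roots $\mathcal R$ via \Cref{thm:maximum}, define $\parent{\cdot}$ through the canonical shortest $s$--$t$ path in $D(R,I)$, prove completeness by induction on $v(I)$ using \Cref{lem:card}, and bound the delay as (depth $\le n$) $\times$ ($O(n^5Q)$ per node from $O(n^3)$ candidates each checked in $O(n^2Q)$). One cosmetic slip: in your completeness paragraph the candidates are $\parent{I}\xor X$, not $\comp{\parent{I}\xor X}$ --- the $\comp{\cdot}$ is unnecessary here since the child $I$ is already maximal and the algorithm at line~\ref{line:comp:child} (which you correctly cite) does not apply $\comp{\cdot}$; this does not affect the argument or the bounds.
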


\begin{algorithm}[t]
    \SetAlgoLined
    \Procedure{\Maximal{$\mat_1 = (S, \mathcal I_1), \mat_2 = (S, \mathcal I_2), \tau$}}{
        Choose arbitrary $R \in \mathcal R$\;
        \lForEach(\tcp*[f]{Use \Cref{algo:maximum}.}){$I \in \mathcal R$}{
            \RecMaximal{$\mat_1, \mat_2, I, R, \tau$}
        }
    }
    \Procedure{\RecMaximal{$\mat_1, \mat_2, I, R, \tau$}}{
        Output $I$\;
        
        \ForEach{$X \in \binom{S}{3} \cup \binom{S}{2}$}{\label{line:comp:X}
            $I' \gets I \xor X$\;\label{line:comp:child}
            \If{$I'$ is a maximal common independent set of $\mat_1$ and $\mat_2$ such that $\tau \le \size{I'} < \size{R}$ and $I = \parent{I'}$}{
                \RecMaximal{$\mat_1, \mat_2, I', R, \tau$}\;
            }
        }
    }
    \caption{A polynomial-delay and polynomial-space algorithm for enumerating all maximal common independent sets of $\mat_1$ and $\mat_2$ with the cardinality at least $\tau$.}
    \label{algo:maximal}
\end{algorithm}


\subsection{Proof of Lemma~\ref{lem:+1}}\label{subsec:lem}
To complete our proof of \Cref{thm:large}, we need to show the correctness of \Cref{lem:+1}.
To this end, we focus on $D(I)$.
Since $I$ is a maximal common independent set of $\mat_1$ and $\mat_2$ with $\size{I} < \size{R}$, 
$D(I)$ has a directed $s$-$t$ path $P = (v_1 = s, v_2 = f, v_3 = e, \dots, v_{2k+1} = t)$.
By the definition of $D(I)$, $I \xor \set{e, f}$ is a common independent set of $\mat_1$ and $\mat_2$.
We first show that $I \xor \set{e, f}$ becomes dependent in $\mat_1$ when we add an element $f'$ in $N^+_{D(I)}(e)$.

\begin{lemma}\label{lem:-}
    Let $I$ be a maximal common independent set of $\mat_1$ and $\mat_2$,
    $e$ be an element in $I$, and
    $f_1$ and $f_2$ be distinct two elements in $N^+_{D(I)}(e)$.
    Then, $I' \coloneqq (I \setminus \set{e}) \cup \set{f_1, f_2}$ is dependent in $\mat_1$.
\end{lemma}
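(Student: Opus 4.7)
The plan is to use the weak circuit elimination axiom applied to the two fundamental circuits associated with the arcs $(e, f_1)$ and $(e, f_2)$ in $\mat_1$. First, I will unpack what the arcs $(e, f_1), (e, f_2) \in A_1$ tell us: since each $f_i \in S \setminus I$ satisfies $I \cup \set{f_i} \notin \mathcal I_1$ while $(I \cup \set{f_i}) \setminus \set{e} \in \mathcal I_1$, the set $I \cup \set{f_i}$ contains a unique circuit $C_i$ of $\mat_1$ (because $I$ is independent), and deleting $e$ destroys this circuit, so $e \in C_i$. Furthermore, $f_i \in C_i$: if not, $C_i$ would be contained in the independent set $I$, a contradiction.

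Next I will verify that the circuit elimination axiom is applicable. Since $f_1 \ne f_2$ with $f_1 \in C_1$ and $f_2 \in C_2$, while $f_2 \notin C_1$ (as $C_1 \subseteq I \cup \set{f_1}$ and $f_2 \notin I$), the circuits $C_1$ and $C_2$ are distinct. Moreover, $e \in C_1 \cap C_2$, so the weak circuit elimination axiom yields a circuit $C_3$ of $\mat_1$ with $C_3 \subseteq (C_1 \cup C_2) \setminus \set{e}$.

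Finally, I will observe that
\begin{align*}
    (C_1 \cup C_2) \setminus \set{e} \subseteq ((I \cup \set{f_1}) \cup (I \cup \set{f_2})) \setminus \set{e} = (I \setminus \set{e}) \cup \set{f_1, f_2} = I',
\end{align*}
so $C_3 \subseteq I'$, which makes $I'$ dependent in $\mat_1$, as required.

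I do not anticipate a substantive obstacle; the only thing that requires a bit of care is confirming that the two fundamental circuits are genuinely distinct and that both contain $e$, so that circuit elimination can be legitimately invoked at $e$. Once that is established, the containment chain above immediately produces the required circuit inside $I'$.
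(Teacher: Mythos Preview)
Your proof is correct and follows essentially the same approach as the paper: both identify the fundamental circuits $C_1 \ni \set{e,f_1}$ and $C_2 \ni \set{e,f_2}$ in $\mat_1$, apply the circuit elimination axiom at $e$, and observe that the resulting circuit lies in $I'$. You are in fact slightly more careful than the paper in explicitly verifying that $C_1 \neq C_2$ before invoking circuit elimination.
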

\begin{proof}
    Since $D(I)$ has arcs $(e, f_1)$ and $(e, f_2)$, 
    both $(I\setminus \set{e}) \cup \set{f_1}$ and $(I\setminus \set{e}) \cup \set{f_2}$ are independent in $\mat_1$, and 
    $I \cup \set{f_1}$ and $I \cup \set{f_2}$ are dependent in $\mat_1$.
    Thus, $\mat_1$ has two circuits $C_1$ and $C_2$ that contain $\set{e, f_1}$ and $\set{e, f_2}$, respectively.
    By the circuit elimination axiom, there is a circuit $C_3 \subseteq (C_1 \cup C_2) \setminus \set{e}$.
    Since $I'$ contains $(C_1 \cup C_2) \setminus \set{e}$, 
    it also contains $C_3$ and hence is dependent in $\mat_1$.
\end{proof}

\begin{lemma}\label{lem:+}
    Let $I$ be a maximal common independent set of $\mat_1$ and $\mat_2$,
    $e$ be an element in $I$, and
    $f_1$ and $f_2$ be distinct two elements in $N^-_{D(I)}(e)$.
    Then, $I' \coloneqq (I \setminus \set{e}) \cup \set{f_1, f_2}$ is dependent in $\mat_2$
\end{lemma}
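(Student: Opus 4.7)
The statement is the perfect mirror image of \Cref{lem:-}: the roles of $\mat_1$ and $\mat_2$ are swapped, and in-neighbors replace out-neighbors. The plan is therefore to reproduce the argument of \Cref{lem:-} almost verbatim, with the circuit elimination axiom applied in $\mat_2$ rather than in $\mat_1$.

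First, I would unpack what $f_1,f_2 \in N^-_{D(I)}(e)$ means combinatorially. Since $e \in I$, the incoming arcs to $e$ in $D(I)$ cannot belong to $A_1$ (whose arcs leave $I$), nor to $A_3$ (whose arcs leave $s$) or $A_4$ (whose arcs enter $t$). Hence $(f_1,e),(f_2,e) \in A_2$, which, by the definition of $A_2$, forces $f_1,f_2 \in S\setminus I$ and, for $i=1,2$, both $I \cup \{f_i\} \notin \mathcal I_2$ and $(I \cup \{f_i\}) \setminus \{e\} \in \mathcal I_2$.

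Second, I would produce the two fundamental circuits. Since $I\cup\{f_i\}$ is dependent in $\mat_2$ but $I$ is independent in $\mat_2$, there is a unique circuit $C_i$ of $\mat_2$ with $C_i \subseteq I \cup \{f_i\}$; since $(I\cup\{f_i\})\setminus\{e\}$ is independent, this $C_i$ must contain $e$. Thus $\{e,f_i\} \subseteq C_i \subseteq I \cup \{f_i\}$ for $i=1,2$. Applying the weak circuit elimination axiom to $C_1$ and $C_2$ with respect to $e$ yields a circuit $C_3$ of $\mat_2$ with $C_3 \subseteq (C_1 \cup C_2) \setminus \{e\}$.

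Finally, I would verify the inclusion $C_3 \subseteq I'$. From $C_i \setminus \{e\} \subseteq (I \setminus \{e\}) \cup \{f_i\}$ for $i=1,2$ we get
\[
(C_1 \cup C_2) \setminus \{e\} \;\subseteq\; (I \setminus \{e\}) \cup \{f_1,f_2\} \;=\; I',
\]
so $C_3 \subseteq I'$ and $I'$ is dependent in $\mat_2$, as required. There is essentially no obstacle here: the whole argument is the dual of the proof of \Cref{lem:-}, and the only point that merits an explicit check is that the in-neighbors of an element of $I$ in $D(I)$ necessarily sit in $S\setminus I$ and come from $A_2$, which is exactly where $\mat_2$-information is recorded.
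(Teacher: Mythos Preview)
Your proposal is correct and follows essentially the same approach as the paper: extract from the $A_2$-arcs $(f_i,e)$ two circuits $C_i$ of $\mat_2$ with $\{e,f_i\}\subseteq C_i\subseteq I\cup\{f_i\}$, apply circuit elimination at $e$, and observe that the resulting circuit lies in $I'=(I\setminus\{e\})\cup\{f_1,f_2\}$. Your write-up is in fact slightly more detailed (you explicitly justify why the in-neighbors must come from $A_2$ and why each $C_i$ is unique), but the argument is the same dual of \Cref{lem:-}.
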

\begin{proof}
    Since $D(I)$ has arcs $(f_1, e)$ and $(f_2, e)$, 
    both $I\xor \set{e, f_1}$ and $I\xor \set{e, f_2}$ are independent in $\mat_2$ and 
    $I \cup \set{f_1}$ and $I \cup \set{f_2}$ are dependent in $\mat_2$.
    Thus, $\mat_2$ has two circuits $C_1$ and $C_2$ that contain $\set{e, f_1}$ and $\set{e, f_2}$, respectively.
    By the circuit elimination axiom, there is a circuit $C_3 \subseteq (C_1 \cup C_2) \setminus \set{e}$.
    Since $I'$ contains $(C_1 \cup C_2) \setminus \set{e}$, 
    it also contains $C_3$ and hence is dependent in $\mat_2$.
\end{proof}

We show that $\comp{I \xor \set{e, f}}$ does not contain any element in $S \setminus (I \cup N^+_{D(I)}(e) \cup N^-_{D(I)}(e))$.

\begin{lemma}\label{lem:non}
    Let $I$ be a maximal common independent set of $\mat_1$ and $\mat_2$,
    $e$ be an element in $I$, and
    $f$ be an element in $S \setminus (I \cup N^+_{D(I)}(e) \cup N^-_{D(I)}(e))$.
    Then, $I \xor \set{e, f}$ is dependent in at least one of $\mat_1$ or $\mat_2$.
\end{lemma}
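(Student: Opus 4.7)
The plan is a direct case analysis that unfolds the definitions of the arc sets $A_1$ and $A_2$ of $D(I)$, combined with the maximality of $I$. Since $e \in I$, the vertex $e$ can only be incident in $D(I)$ to arcs arising from $A_1$ (outgoing) and $A_2$ (incoming); arcs from $A_3, A_4$ involve $s$ or $t$, not $e$. In particular, $N^+_{D(I)}(e) \cup N^-_{D(I)}(e) \subseteq S \setminus I$, so the hypothesis that $f$ lies outside both neighborhoods makes sense element-wise in $S \setminus I$.

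First I would translate the two hypotheses. Since $f \in S \setminus I$ and $e \in I$, the fact that $D(I)$ does not contain the arc $(e,f)$ means, by the definition of $A_1$, that the conjunction ``$I \cup \set{f} \notin \mathcal I_1$ and $(I \cup \set{f}) \setminus \set{e} \in \mathcal I_1$'' fails; equivalently,
\[
  I \cup \set{f} \in \mathcal I_1 \quad \text{or} \quad (I \cup \set{f}) \setminus \set{e} \notin \mathcal I_1.
\]
Symmetrically, $f \notin N^-_{D(I)}(e)$ unfolds via $A_2$ to
\[
  I \cup \set{f} \in \mathcal I_2 \quad \text{or} \quad (I \cup \set{f}) \setminus \set{e} \notin \mathcal I_2.
\]

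Next I would invoke maximality. Because $I$ is a maximal common independent set of $\mat_1$ and $\mat_2$ and $f \in S \setminus I$, the set $I \cup \set{f}$ is not a common independent set, so it must fail independence in at least one of the two matroids. If $I \cup \set{f} \notin \mathcal I_1$, then the first disjunction collapses to $(I \cup \set{f}) \setminus \set{e} \notin \mathcal I_1$; but $(I \cup \set{f}) \setminus \set{e} = I \xor \set{e,f}$ because $e \in I$ and $f \notin I$, so $I \xor \set{e,f}$ is dependent in $\mat_1$. If instead $I \cup \set{f} \notin \mathcal I_2$, an identical argument using the second disjunction shows that $I \xor \set{e,f}$ is dependent in $\mat_2$. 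Either way, the conclusion of the lemma holds.

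There is no real obstacle: the proof is essentially a bookkeeping exercise in the definitions of $A_1, A_2$. The only point that warrants care is confirming that the non-membership of $f$ in $N^{\pm}_{D(I)}(e)$ really does only rule out arcs coming from $A_1$ and $A_2$ (which is why the observation about $e \in I$ at the outset is needed), and that the symmetric difference $I \xor \set{e,f}$ coincides with $(I \cup \set{f}) \setminus \set{e}$ under our hypotheses so that the ``extension minus $e$'' appearing in the definitions of $A_1, A_2$ is exactly the set we want to analyze.
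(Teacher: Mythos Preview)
Your proof is correct, and in fact more elementary than the paper's. Both arguments begin the same way: maximality of $I$ forces $I \cup \{f\}$ to be dependent in some $\mat_i$, and one then wants to conclude that $(I \cup \{f\}) \setminus \{e\}$ is still dependent in that same $\mat_i$. The paper establishes this by invoking the unique fundamental circuit $C \subseteq I \cup \{f\}$ (uniqueness via the circuit elimination axiom), identifying $C \setminus \{f\}$ with the relevant neighborhood of $f$ in $D(I)$, and then arguing that the hypothesis forces $e \notin C$, so that removing $e$ leaves $C$ intact. You instead just negate the defining conjunction of $A_1$ (respectively $A_2$) and read off the desired dependence directly. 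Your route avoids the circuit machinery entirely and is the cleaner argument for this particular lemma; the paper's approach, on the other hand, makes explicit the structural fact $N^+_{D(I)}(f) \cap I = C \setminus \{f\}$, which is informative even if not strictly needed here.
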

\begin{proof}
    From the maximality of $I$, $I \cup \set{f}$ is dependent in at least one of $\mat_1$ or $\mat_2$.
    Suppose that $I \cup \set{f}$ is dependent on $\mat_2$.
    Then, $I \cup \set{f}$ contains at least one circuit $C$ of $\mat_2$ containing $f$.
    We show that $I \cup \set{f}$ contains only one circuit of $\mat_2$.
    If $I \cup \set{f}$ contains another circuit $C'$ with $f \in C'$, by the circuit elimination axiom, $(C \cup C') \setminus \set{f}$ contains a circuit, which contradicts the fact that $I$ is independent in $\mat_2$.
    Thus, $\mat_2$ has the unique circuit $C$, which is contained in $I \cup \set{f}$.
    Observe that $N^+_{D(I)}(f) \cap I = C \setminus \set{f}$, since $(I \cup \set{f}) \setminus \set{e'}$ is independent in $\mat_2$ for $e' \in C$ due to the minimality of $C$.
    As $f \in S \setminus (I \cup N^+_{D(I)}(e) \cup N^-_{D(I)}(e))$, we have $e \notin C$.
    Hence, $(I\setminus \set{e})\cup\set{f}$ contains $C$, that is, $(I\setminus \set{e})\cup\set{f}$ is dependent in $\mat_2$.
    When $I \cup \set{f}$ is dependent in $\mat_1$, $(I\setminus\set{e})\cup\set{f}$ is also dependent in $\mat_1$ from a similar discussion.    
\end{proof}

Now we are ready to prove \Cref{lem:+1}.

\pone*

\begin{proof}
    By the definition of $D(I)$, $I\xor \set{e, f}$ is a common independent set since $D(I)$ has two arcs $(s, f)$ and $(f, e)$.
    Thus, $\comp{I \xor \set{e, f}}$ is a maximal common independent set of $\mat_1$ and $\mat_2$.
    We show that $\size{\mu(I \xor \set{e, f})} \le \size{I} + 1$.
    Since $f$ is contained in $N^-_{D(I)}(e)$, $\mu(I \xor \set{e, f})$ does not contain elements in $N^-_{D(I)}(e)$ except for $f$ by \Cref{lem:+}.
    Moreover, by \Cref{lem:-}, $\mu(I \xor \set{e, f})$ contains at most one element in $N^+_{D(I)}(e)$.
    Finally, $\mu(I \xor \set{e, f})$ does not contain any element in $S \setminus (I \cup N^+_{D(I)}(e) \cup N^-_{D(I)}(e))$ by \Cref{lem:non}.
    Therefore, $\mu(I \xor \set{e, f}) \setminus (I \xor \set{e, f})$ contains at most one element.
    Since $\size{I \xor \set{e, f}} = \size{I}$, $\size{\mu(I \xor \set{e, f})} \le \size{I} + 1$.
\end{proof}


\section{Beyond common independent sets in two matroids: matroid matching}\label{sec:parity}
In the previous section, we give a polynomial-delay and polynomial-space algorithm for enumerating maximal common independent sets of two matroids with cardinality at least $\tau$.
To show the correctness of our algorithm,
the following properties are essential.

\begin{enumerate}
    \item All maximum solutions $\mathcal R$ (i.e., maximum common independent sets of $\mat_1$ and $\mat_2$) can be enumerated in polynomial delay and polynomial space.
    \item We define a rooted forest on the set of all solutions $\mathcal S$ via $\parent{\cdot}$ such that $\mathcal R$ is the set of its roots.
    \item For a maximal feasible solution $M$ that is not in $\mathcal R$, we have $|M| \le |\parent{M}|$.
    \item We can enumerate the children of each solution in $\mathcal S$ in polynomial time.
\end{enumerate}

Similarly to the algorithm of \Cref{thm:large}, by the properties 1, 2, and 4, we can enumerate all solutions using the reverse search technique. 
Moreover, the property 3 ensures the ``monotonicity'' of solutions on root-leaf paths defined by $\parent{\cdot}$.
Thus, it suffices to define the parent-child relationship $\parent{\cdot}$ that satisfies the above four properties for maximal matroid matchings.
In what follows, let $\mat = (S, \mathcal I)$ be a matroid and let $G = (S, E)$ be a graph.
Moreover, we assume that $(\mat, G)$ is tractable.
For $F \subseteq E$, we denote $\bigcup_{e \in F}e$ as $V(F)$.

\subsection{Enumerating maximum matroid matchings}
We first give a polynomial-delay algorithm for enumerating maximum matchings, proving the property 1.
Our algorithm runs in $\poly(n + Q)$ delay and $\poly(n + \hat Q)$ space.
The outline of this algorithm is identical to that described in \Cref{sec:maximum}.
To enumerate all maximum matching, we consider the following extension problem defined.

\begin{definition}
    Given a tractable pair $(\mat, G)$ and two disjoint subsets of edges $\mathit{In}, \mathit{Ex} \subseteq E$, \textsc{Maximum Matroid Matching Extension} asks to find a maximum matching $M \subseteq E$ of $(\mat, G)$ such that $\mathit{In} \subseteq M$ and $M \cap \mathit{Ex} = \emptyset$.
\end{definition}

We show that 
this problem can be solved in polynomial time, provided that $(\mat, G)$ is tractable.
By the tractability of $(\mat, G)$,
we can find a maximum matching of $(\mat', G')$ in polynomial time, where $\mat' = (\mat \cont V(\mathit{In})) \rest V(\mathit{Ex})$ and $G' = G[S \setminus V(\mathit{In})] - \mathit{Ex}$.
The following corollary immediately give a polynomial-time algorithm for \textsc{Maximum Matroid Matching Extension} of tractable pairs.

\begin{corollary}
    Let $\mathbf M = (S, \mathcal I)$ be a matroid, 
    $G$ be a graph $(S, E)$, and
    $\mathit{In}$ and $\mathit{Ex}$ be a disjoint set of edges of $G$.
    Let $\mat' = (\mat \cont V(\mathit{In})) \rest V(\mathit{Ex})$ and $G' = G[S \setminus V(\mathit{In})] - \mathit{Ex}$.
    Then, there is a maximum matching $M$ of $(\mat, G)$ that satisfies $\mathit{In} \subseteq M$ and $M \cap \mathit{Ex} = \emptyset$ 
    if and only if $(\mat', G')$ has a matching with the cardinality $\size{M} - \size{\mathit{In}}$.
\end{corollary}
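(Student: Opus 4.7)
The plan is to construct an explicit cardinality-preserving bijection between (i) matchings $M$ of $(\mat, G)$ with $\mathit{In} \subseteq M$ and $M \cap \mathit{Ex} = \emptyset$, and (ii) matchings $M'$ of $(\mat', G')$, via $M \mapsto M \setminus \mathit{In}$ with inverse $M' \mapsto M' \cup \mathit{In}$. Since this bijection changes cardinality by the constant $\size{\mathit{In}}$, it restricts to a bijection between the maximum-cardinality elements on each side, yielding the claimed equivalence. I may assume without loss of generality that $\mathit{In}$ is itself a matching of $(\mat, G)$, since otherwise both sides of the statement are vacuously false; in particular, $V(\mathit{In})$ is independent in $\mat$ and is a base of $\mat \rest V(\mathit{In})$.

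The key tool is the contraction characterization: a subset $X \subseteq S \setminus V(\mathit{In})$ is independent in $\mat \cont V(\mathit{In})$ if and only if $X \cup V(\mathit{In})$ is independent in $\mat$. This follows directly from the definition of contraction using $V(\mathit{In})$ as the designated base of $\mat \rest V(\mathit{In})$, and is essentially the analogue for $\mat \cont V(\mathit{In})$ of \Cref{prop:union_intersection}.

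Both directions of the bijection then follow by routine verification. For the forward direction, given $M$ with $\mathit{In} \subseteq M \subseteq E \setminus \mathit{Ex}$, set $M' \coloneqq M \setminus \mathit{In}$; vertex-disjointness of $M$ in $G$ forces endpoints of $M'$ to avoid $V(\mathit{In})$, so $M' \subseteq E(G[S \setminus V(\mathit{In})])$, and combined with $M' \cap \mathit{Ex} = \emptyset$ this gives $M' \subseteq E(G')$; applying the characterization to $X = V(M')$ yields $V(M')$ independent in $\mat \cont V(\mathit{In})$, so $M'$ is a matching of $(\mat', G')$ with $\size{M'} = \size{M} - \size{\mathit{In}}$. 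The reverse direction is symmetric: for a matching $M'$ of $(\mat', G')$, the set $M \coloneqq M' \cup \mathit{In}$ is vertex-disjoint in $G$ (edges of $M'$ lie in $G[S \setminus V(\mathit{In})]$) and avoids $\mathit{Ex}$ (since $M' \subseteq E(G') \subseteq E \setminus \mathit{Ex}$ and $\mathit{In} \cap \mathit{Ex} = \emptyset$), while the characterization in the other direction gives $V(M) = V(M') \cup V(\mathit{In})$ independent in $\mat$. The main obstacle is purely bookkeeping around the contraction operation; all of it reduces to invoking the single characterization above, so no deeper properties of $(\mat, G)$ beyond those already established are needed.
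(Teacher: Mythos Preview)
Your approach is correct and matches what the paper has in mind: the corollary is stated in the paper without any proof, presented as an immediate consequence of the definitions (it is introduced only by the sentence ``The following corollary immediately give a polynomial-time algorithm for \textsc{Maximum Matroid Matching Extension} of tractable pairs''). Your explicit bijection $M \mapsto M \setminus \mathit{In}$ together with the contraction characterization is exactly the routine verification that the paper leaves to the reader, and is the natural analogue of the argument the paper sketches in \Cref{sec:maximum} for the common-independent-set version.
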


\begin{theorem}\label{thm:parity:maximum}
    For a tractable pair $(\mat, G)$,
    we can enumerate all maximum matchings with $\poly(n + Q)$ delay and $\poly(n + \hat Q)$ space.
\end{theorem}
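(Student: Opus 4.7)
The plan is to reuse the flashlight search framework of \Cref{algo:maximum} verbatim, replacing \textsc{Maximum Common Independent Set Extension} with \textsc{Maximum Matroid Matching Extension}. By the preceding corollary combined with the tractability of $(\mat, G)$, the extension problem is solvable in $\poly(n + Q)$ time and $\poly(n + \hat Q)$ space: given disjoint edge sets $\mathit{In}, \mathit{Ex} \subseteq E$ with $\mathit{In}$ itself a matching of $(\mat, G)$, construct $\mat' = (\mat \cont V(\mathit{In})) \rest V(\mathit{Ex})$ and $G' = G[S \setminus V(\mathit{In})] - \mathit{Ex}$, then invoke the tractability oracle to find a maximum matching of $(\mat', G')$ and check whether its size equals ${\rm opt} - \size{\mathit{In}}$, where ${\rm opt}$ is the size of a maximum matching of $(\mat, G)$ (computed once at preprocessing, again using tractability).

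Concretely, I would let $\mathcal S(\mathit{In}, \mathit{Ex})$ denote the set of maximum matchings $M$ of $(\mat, G)$ with $\mathit{In} \subseteq M$ and $M \cap \mathit{Ex} = \emptyset$, so that $\mathcal S(\emptyset, \emptyset)$ is the desired output. The recursive procedure picks an arbitrary edge $e \in E \setminus (\mathit{In} \cup \mathit{Ex})$, uses the extension test to decide which of $\mathcal S(\mathit{In} \cup \set{e}, \mathit{Ex})$ and $\mathcal S(\mathit{In}, \mathit{Ex} \cup \set{e})$ is non-empty, recurses only into non-empty branches, and outputs $\mathit{In}$ once $\mathit{In} \cup \mathit{Ex} = E$. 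Since $\set{\mathcal S(\mathit{In} \cup \set{e}, \mathit{Ex}), \mathcal S(\mathit{In}, \mathit{Ex} \cup \set{e})}$ is a partition of $\mathcal S(\mathit{In}, \mathit{Ex})$, correctness (no duplicates, no omissions) is immediate. The depth of the recursion tree is at most $\size{E} = \order{n^2}$, so the distance between any two consecutive leaves is $\order{n^2}$, and since each node performs one extension test in $\poly(n + Q)$ time, the delay is $\poly(n + Q)$. Each stack frame stores only the pair $(\mathit{In}, \mathit{Ex})$ in $\order{n^2}$ space, and the depth is $\order{n^2}$, so total space is $\poly(n + \hat Q)$.

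The one subtle point I would treat carefully is the precondition of the extension corollary: it assumes $\mathit{In}$ is already a matching of $(\mat, G)$. During the descent, $\mathit{In}$ grows one edge at a time, so after tentatively adding an edge $e$ the set $\mathit{In} \cup \set{e}$ is a matching of $(\mat, G)$ if and only if the endpoints of $e$ are uncovered by $\mathit{In}$ and $V(\mathit{In}) \cup e$ is independent in $\mat$; both conditions can be verified in $\order{n + Q}$ time using the independence oracle. If either fails, then $\mathcal S(\mathit{In} \cup \set{e}, \mathit{Ex})$ is empty and the branch is pruned. This auxiliary check is performed at most once per recursion step, so it does not affect the asymptotic delay or space bounds, and the theorem follows.
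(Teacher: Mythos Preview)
Your proposal is correct and follows essentially the same approach as the paper: solve \textsc{Maximum Matroid Matching Extension} via the preceding corollary and tractability, then plug this into the flashlight search of \Cref{algo:maximum}. You supply more detail than the paper does (in particular the check that $\mathit{In} \cup \set{e}$ remains a matching of $(\mat, G)$ before invoking the corollary), but the underlying argument is identical.
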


\subsection{Enumerating of large maximal matroid matchings}
In this subsection, we give a polynomial-delay and polynomial-space enumeration algorithm for
\textsc{Large Maximal Matroid Matching Enumeration} for tractable pairs $(\mat, G)$.
The outline of our algorithm is again identical to that in \Cref{sec:large}.
In the following, we show analogous lemmas of~\Cref{lem:+1} and \ref{lem:card}.
Fix a maximum matching $R$ of $(\mat, G)$.
For a matching $M$ of $(\mat, G)$, we denote by $\mu(M)$ an arbitrary maximal matching of $(\mat, G)$ that contains $M$.

\begin{lemma}\label{lem:ana:common_independent}
    Let $M$ be a maximal matching of $(\mat, G)$ with $|M| < |R|$.
    Then, there is a pair of edges $e \in M \setminus R$ and $f \in R \setminus M$
    such that $(M \setminus \set{e}) \cup\set{f}$ is a matching of $(\mat, G)$.
    Moreover, $\size{\mu((M \setminus \set{e}) \cup \set{f})} \le \size{M} + 2$.
\end{lemma}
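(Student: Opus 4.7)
The plan is to lift the strategy of Lemmas~\ref{lem:common_independent} and~\ref{lem:+1} from common independent sets to matroid matchings. The proof will have two parts: first exhibit a swap pair $(e,f)$ satisfying the first conclusion, then bound the completion $\mu((M \setminus \set{e}) \cup \set{f})$ using circuit elimination in the spirit of the lemmas of \Cref{subsec:lem}.

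For existence, I will work inside the restricted--contracted matroid $\mat^* \coloneqq (\mat \rest (V(M) \cup V(R))) \cont (V(M) \cap V(R))$. By \Cref{prop:union_intersection}, both $V(M) \setminus V(R)$ and $V(R) \setminus V(M)$ are independent in $\mat^*$, and $|V(R) \setminus V(M)| > |V(M) \setminus V(R)|$ since $|R| > |M|$. Matroid exchange then yields some $v \in V(R) \setminus V(M)$ with $V(M) \cup \set{v}$ independent in $\mat$; let $f = \set{v, w}$ be the $R$-edge containing $v$. When $w \in V(M)$, the $M$-edge $e$ meeting $w$ must lie in $M \setminus R$ (else $f$ and $e$ would be two distinct $R$-edges sharing $w$), and $(M \setminus \set{e}) \cup \set{f}$ is a matching in $G$ whose vertex set is contained in the independent set $V(M) \cup \set{v}$, so the swap is valid. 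The main obstacle is the case $w \notin V(M)$: I would resolve it using the alternating-path structure of $M \xor R$ in $G$. Since $|R \setminus M| > |M \setminus R|$, there is at least one alternating path whose $R$-edges strictly outnumber its $M$-edges; whenever such a path has length at least three, its first $M$-edge provides the desired $e$, with matroid independence of the new vertex set following from the initial matroid-exchange choice of $v$. If every such path has length one, a secondary matroid-exchange argument inside $\mat^*$ re-selects $v$ so that its $R$-partner lies in $V(M)$, reducing to the previous case.

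For the cardinality bound $|\comp{(M \setminus \set{e}) \cup \set{f}}| \le |M| + 2$, I would parallel the logic of Lemmas~\ref{lem:-},~\ref{lem:+},~\ref{lem:non}, and~\ref{lem:+1}. Any edge $g$ attached when forming $\mu((M \setminus \set{e}) \cup \set{f})$ contributes two vertices that must extend the current vertex set independently in $\mat$. The removed edge $e$ left at most two ``freed'' vertices (those endpoints of $e$ not shared with $f$), and circuit elimination applied locally around each freed vertex caps the number of admissible extra edges at one per freed vertex, for a total of at most two. The key technical obstacle is the absence of a clean analogue of the directed auxiliary graph $D(I)$ from \Cref{sec:large}: here a local exchange is controlled by two vertices per edge rather than one ground-set element, and the neighborhood must encode matroid exchanges and graph-matching feasibility simultaneously. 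I would therefore introduce two vertex-centered auxiliary neighborhoods, one per freed endpoint of $e$, and apply the circuit-elimination axiom independently to each, in direct analogy with \Cref{lem:-,lem:+,lem:non}.
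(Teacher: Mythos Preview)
Your plan has the right two-part shape, but each part has a real gap.

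For the existence of the swap pair, the case $w \notin V(M)$ is not handled by your alternating-path argument. With $v \notin V(M)$ and $w \notin V(M)$, neither endpoint of $f$ is $M$-saturated, so the component of $M \xor R$ through $f$ is the single edge $f$; your ``path of length at least three'' never arises here, and the fallback (``re-select $v$ so that its $R$-partner lies in $V(M)$'') is not something matroid exchange guarantees. You also seem to switch midway between the $R$-edge at the matroid-exchanged vertex $v$ and the first $R$-edge of an arbitrary augmenting path; these need not coincide, so the sentence ``matroid independence \ldots\ following from the initial matroid-exchange choice of $v$'' does not apply. The paper's argument for this case is much shorter and purely matroidal: since $M \cup \{f\}$ is a matching of $G$, maximality forces $V(M) \cup \{v,w\}$ to be dependent in $\mat$; the unique circuit $C$ contains $w$ (because $V(M) \cup \{v\}$ is independent) and meets $V(M) \setminus \{v,w\}$ (because $\{v,w\} \subseteq V(R)$ is independent), and any $M$-edge $e$ touching $C \setminus \{v,w\}$ gives the swap.

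For the bound $|\mu(M')| \le |M| + 2$, the slogan ``one admissible extra edge per freed endpoint of $e$, handled independently'' is too optimistic. Writing $e = \{y,z\}$ and supposing $\mu(M') \setminus M$ contained three edges $e_1,e_2,e_3$, the circuits $C_i \subseteq V(M \cup \{e_i\})$ may each contain \emph{both} $y$ and $z$; eliminating at $y$ then produces a circuit that still contains $z$, so the two freed vertices cannot be decoupled. The paper's proof is an honest case analysis: first on how many of the $e_i$ are incident to $\{y,z\}$ in $G$, and then, for the non-incident ones, on whether their circuits contain one or both of $y,z$; several subcases require two successive applications of circuit elimination before the contradiction with independence of $V(\mu(M'))$ emerges. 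Your outline is missing this second layer.
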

\begin{proof}
    We first show that there is a pair of edges $e \in M \setminus R$ and $f \in R \setminus M$
    such that $(M \setminus \set{e}) \cup\set{f}$ is a matching of $(\mat, G)$.
    Since $V(R)$ and $V(M)$ are independent in $\mat$ and $\size{V(R)} > \size{V(M)}$, $V(R) \setminus V(M)$ has an element $x$ such that $V(M) \cup \set{x}$ is independent in $\mat$.
    Moreover, since $R$ is a matchings of $G$, $R \setminus M$ contains an edge $f = \set{x, x'}$.
    
    Suppose that $M \cup \set{f}$ is not a matching of $G$, 
    Then, $M$ has an edge $e = \set{x', x''}$ with $x'' \neq x$ as $V(M)$ does not contain $x$.
    In this case, $M' = (M \setminus \set{e}) \cup \set{f}$ is a matching of $G$.
    Moreover,
    \begin{align*}
        V(M') = (V(M) \setminus \set{x', x''}) \cup \set{x, x'} = (V(M) \setminus \set{x''})\cup \set{x}
    \end{align*}
    is independent in $\mat$, which implies that $M'$ is a matching of $(\mat, G)$.

    Suppose otherwise that $M \cup \set{f}$ is a matching of $G$.
    As $M$ is a maximal matching of $(\mat, G)$, $V(M \cup \set{f})$ is dependent in $\mat$.
    Since $V(M) \cup \{x\}$ is independent in $\mat$, 
    $V(M \cup \set{f})$ has a circuit $C$ that contains $x'$.
    Notice that $\set{x, x'}$ is independent in $\mat$ as $\{x, x'\} \subseteq V(R)$.
    This implies that $C \setminus \set{x, x'}$ is non-empty.
    Thus, $M$ contains an edge $e$ that has an element of $C \setminus \set{x, x'}$, which implies that $V((M \setminus\set{e}) \cup \set{f})$ is independent in $\mat$.
    Therefore, $M' = (M \setminus \set{e}) \cup \set{f}$ is a matching of $(\mat, G)$.

    We next show that $\size{\mu(M')} \le \size{M} + 2$.
    In what follows, we let $e = \set{y, z}$.
    Suppose that $\mu(M') \setminus M$ has three distinct edges $e_1 = \set{x_1, x'_1}$, $e_2 = \set{x_2, x'_2}$, and $e_3 = \set{x_3, x'_3}$.
    Since $M$ is a maximal matching of $(\mat, G)$, for each $e_i$, $M$ has an edge that is incident to at least one of $x_i$ and $x'_i$ or $V(M \cup \set{e_i})$ is dependent in $\mat$.

    Suppose that there are two edges, say $e_1$ and $e_2$, in $\set{e_1, e_2, e_3}$ that are incident to $y$ or $z$.
    This implies that $\set{y, z} \subseteq V(\set{e_1, e_2, e_3})$, and hence we have
    \begin{align*}
        V(M \cup \set{e_3}) \subseteq (V(M) \setminus \set{y, z}) \cup V(\set{e_1, e_2, e_3}) \subseteq V(\mu(M')).
    \end{align*}
    As $M$ is a maximal matching of $(\mat, G)$, either $M \cup \set{e_3}$ is not a matching of $G$ or $V(M \cup \set{e_3})$ is dependent in $\mat$.
    If $M \cup \{e_3\}$ is not a matching of $G$, $(M \setminus \set{e}) \cup \set{e_1, e_2, e_3}$ is not a matching of $G$.
    Otherwise, $V(M \cup \set{e_3})$ is dependent in $\mat$, which contradicts the fact that $V(\mu(M'))$ is independent in $\mat$.
    
    Suppose next that $\set{e_1, e_2, e_3}$ has exactly one edge incident to $y$ or $z$.
    Without loss of generality, we assume that $e_1$ is incident to $y$.
    If $V(M \cup \set{e_2})$ is independent in $\mat$, by the maximality of $M$, $M \cup \set{e_2}$ is not a matching of $G$.
    However, as $(M \setminus \set{e}) \cup \set{e_1, e_2, e_3} \subseteq \mu(M')$, $M'$ is not a matching of $G$, a contradiction.
    Thus, $V(M \cup \set{e_2})$ is dependent in $\mat$.
    By an analogous argument, $V(M \cup \set{e_3})$ is also dependent in $\mat$.
    As $(V(M) \setminus \set{y, z}) \cup V(\set{e_1, e_2, e_3})$ is independent in $\mat$, 
    $V(M \cup \set{e_i})$ has a circuit $C_i$ with $z \in C_i$ for $i = 2,3$.
    By the circuit elimination axiom, we obtain a circuit $C' \subseteq (C_2 \cup C_3) \setminus \set{z}$.
    However, we have $C' \subseteq (V(M) \setminus \set{z}) \cup V(\set{e_2, e_3}) \subseteq V(\mu(M'))$, which contradicts the fact that $V(\mu(M'))$ is independent in $\mat$.
    
    Finally, suppose that $\set{e_1, e_2, e_3}$ has no edges incident to $y$ or $z$.
    In this case, for $i = 1,2,3$, $V(M \cup \set{e_i})$ is dependent in $\mat$, meaning that it has a circuit $C_i$.
    Since $V(\mu(M'))$ is independent in $\mat$, each $C_i$ contains at least one of $y$ and $z$.
    We classify circuits into two types: type-1 circuits contain both $y$ and $z$; type-2 circuits contain exactly one of $y$ and $z$.
    In the following, we distinguish several cases, each of which derives a contradiction, proving eventually that $|\mu(M')| \le |M| + 2$.

    We first consider the case where there are type-2 circuits, say $C_1$ and $C_2$, that contain $y$.
    As these circuits are type-2, neither $C_1$ nor $C_2$ contain $z$.
    By the circuit elimination axiom, there is a circuit $C_{1, 2}$ in $\mat$ with $C_{1,2} \subseteq (C_1 \cup C_2) \setminus \set{y}$.
    Since 
    \begin{align*}
        (C_1 \cup C_2) \setminus \set{y} &\subseteq (V(M \cup \set{e_1}) \cup V(M \cup \set{e_2})) \setminus \set{y}\\
        &\subseteq (V(M) \setminus \set{y, z}) \cup V(\set{e_1, e_2})\\
        &\subseteq V(\mu(M')),
    \end{align*}
    $V(\mu(M'))$ contains $C_{1, 2}$, contradicting the fact that $V(\mu(M'))$ is independent in $\mat$.
    Thus, there is no more than one type-2 circuit that contains $y$.
    Similarly, there is no more than one type-2 circuit that contains $z$.

    We consider the case $C_1$ and $C_2$ are type-2 such that $y \in C_1$ and $z \in C_2$.
    Without loss of generality, $C_3$ contains $y$.
    Since $C_1$ and $C_3$ are distinct, by the circuit elimination axiom, we obtain a circuit $C_{1, 3} \subseteq (C_1 \cup C_3) \setminus \set{y}$.
    Since both $V(\mu(M'))$ and $V(M)$ are independent in $\mat$, $z \in C_{1, 3}$ and $C_{1, 3} \cap \set{x_1, x'_1, x_3, x'_3} \neq \emptyset$.
    Thus, $C_{1, 3}$ is distinct from $C_2$, as $C_2 \cap \set{x_1, x'_1, x_3, x'_3} = \emptyset$, and hence we obtain a circuit $\hat C \subseteq (C_2 \cup C_{1, 3}) \setminus \set{z}$.
    Since
    \begin{align*}
        (C_{2} \cup C_{1,3}) \setminus \set{z}
        &\subseteq (V(M \cup \set{e_2}) \cup (V(M \cup \set{e_1, e_3}) \setminus \set{y}) \setminus \set{z}\\
        &\subseteq (V(M) \setminus \set{y, z}) \cup V(\set{e_1, e_2, e_3})\\
        &\subseteq V(\mu(M')),
    \end{align*}
    $V(\mu(M'))$ contains $\hat C$, a contradiction.
    
    We next consider the case where there are exactly two type-1 circuits, say $C_1$ and $C_2$.
    Without loss of generality, we assume that $y \in C_3$ and $z \notin C_3$. 
    By the circuit elimination axiom, there is a circuit $C_{1, 2} \subseteq (C_1 \cup C_2) \setminus \set{z}$.
    If $y \notin C_{1, 2}$, we have
    \begin{align*}
        C_{1, 2} \subseteq V(M \setminus \set{y, z}) \cup V(\set{e_1, e_2}) \subseteq V(\mu(M')),
    \end{align*}
    yielding a contradiction.
    Thus, suppose otherwise (i.e., $y \in C_{1,2}$).
    Since $C_3 \cap e_3 \neq \emptyset$ and $C_{1, 2} \cap e_3 = \emptyset$, we have $C_3 \neq C_{1, 2}$.
    Hence, we obtain a circuit $\hat C \subseteq (C_3 \cup C_{1, 2}) \setminus\set{y}$, which satisfies
    \begin{align*}
        \hat C \subseteq V(M \setminus \set{y, z}) \cup V(\set{e_1,e_2, e_3} \subseteq \mu(V(M')),
    \end{align*}
    yielding a contradiction.
    
    We finally consider the case where all $C_1$, $C_2$, and $C_3$ are type-1.
    By the circuit elimination axiom, we obtain circuits
    $C_{1, 2} \subseteq (C_1 \cup C_2) \setminus \set{y}$,
    $C_{2, 3} \subseteq (C_2 \cup C_3) \setminus \set{y}$, and
    $C_{3, 1} \subseteq (C_3 \cup C_1) \setminus \set{y}$.
    Note that $C_{i, j} \cap e_k = \emptyset$ for distinct $i, j, k$.
    Suppose that all of these circuits contain $z$.
    If $C_{1, 2} = C_{2, 3} = C_{3, 1}$,
    we have $C_{1, 2} \subseteq V(M)$, which contradicts the fact that $M$ is independent in $\mat$.
    Thus, we assume that $C_{1, 2} \neq C_{2, 3}$.
    By the circuit elimination axiom, we obtain a circuit $\hat C \subseteq (C_{1, 2} \cup C_{2, 3}) \setminus \set{z}$, which satisfies $\hat C \subseteq \mu(V(M'))$ as above.
    This leads to a contradiction as well.

    In all cases, we have a contradiction, which completes the proof.
\end{proof}

From the above lemma, for each maximal matching $M$ of $(\mat, G)$ with $|M| < |R|$,
there are two edges $e \in M \setminus R$ and $f \in R \setminus M$ such that $((M \setminus \set{e}) \cup \set{f})$ is also a matching of $(\mat, G)$.
We define the parent of $M$ as $\parent{M} = \mu((M \setminus \set{e}) \cup \set{f})$.
As in the same discussion in \Cref{sec:large}, $\parent{M}$ satisfies the following three conditions.

\begin{lemma}\label{lem:props}
    Let $M$ be a maximal matching of $(\mat, G)$ with $\size{M} < \size{\parent{M}}$.
    Then, the following three statements on $\parent{M}$ are satisfied.
    \begin{itemize}
        \item $\size{M} \le \size{\parent{M}}$,
        \item either $\size{R \xor M} > \size{R \xor \parent{M}}$ or $\size{M} < \size{\parent{M}}$, and
        \item $\size{M \xor \parent{M}} \le 4$.
    \end{itemize}
\end{lemma}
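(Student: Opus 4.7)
The plan is to reduce the statement to a short bookkeeping argument based on \Cref{lem:ana:common_independent}. Let $e \in M \setminus R$ and $f \in R \setminus M$ be the pair of edges guaranteed by that lemma and set $M' \coloneqq (M \setminus \set{e}) \cup \set{f}$, so that $M'$ is a matching of $(\mat, G)$ with $\size{M'} = \size{M}$ and $\parent{M} = \mu(M')$. Let $\Delta \coloneqq \parent{M} \setminus M'$. \Cref{lem:ana:common_independent} yields $\size{\Delta} \le 2$, and by construction $\parent{M}$ is the disjoint union $M' \sqcup \Delta$. The first bullet then follows immediately: $\size{\parent{M}} = \size{M'} + \size{\Delta} \ge \size{M}$.

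For the second bullet, the crucial observation is that both $e$ and $f$ lie in $R \xor M$, so the swap from $M$ to $M'$ removes exactly these two elements from the symmetric difference with $R$, giving $\size{R \xor M'} = \size{R \xor M} - 2$. Adjoining $\Delta$ can then change this quantity by at most $\size{\Delta}$ in either direction depending on how many edges of $\Delta$ lie inside $R$. A two-case split closes the argument: if $\size{\Delta} \le 1$ then $\size{R \xor \parent{M}} \le \size{R \xor M} - 1$ and the first clause holds; if $\size{\Delta} = 2$ then $\size{\parent{M}} = \size{M} + 2 > \size{M}$ and the second clause holds. The disjunctive form of this bullet, in contrast to the clean strict inequality in \Cref{lem:card}, is forced precisely by the fact that $\Delta$ may contain up to two edges rather than at most one.

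For the third bullet, I would track each contribution to $M \xor \parent{M}$. Since $\Delta$ is disjoint from $M' = (M \setminus \set{e}) \cup \set{f}$, the only element of $M$ that $\Delta$ could contain is $e$, so a short case split on whether $e \in \Delta$ gives $\size{M \xor \parent{M}} = \size{\Delta} \le 2$ in one case and $\size{M \xor \parent{M}} = 2 + \size{\Delta} \le 4$ in the other. I do not anticipate a substantial obstacle here: once $M'$ and the small ``top-up'' set $\Delta$ have been isolated, the remainder is a routine accounting of set memberships, and all the genuine work has already been carried by \Cref{lem:ana:common_independent}.
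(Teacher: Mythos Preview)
Your proof is correct and follows essentially the same approach as the paper's: both isolate $M' = (M \setminus \set{e}) \cup \set{f}$ and the top-up set $\Delta = \mu(M') \setminus M'$, invoke \Cref{lem:ana:common_independent} for $\size{\Delta} \le 2$, and then carry out routine bookkeeping. The minor differences---your case split at $\size{\Delta} \le 1$ versus $\size{\Delta} = 2$ for the second bullet (the paper splits at $\size{\parent{M}} = \size{M}$ versus $\size{\parent{M}} > \size{M}$), and your case analysis on $e \in \Delta$ for the third bullet (the paper uses the single chain $\size{M \xor \mu(M')} \le \size{M \setminus M'} + \size{M' \setminus M} + \size{\mu(M') \setminus M'} \le 4$)---are cosmetic.
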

\begin{proof}
    Let $e \in M \setminus R$ and $f \in R \setminus M$ be the edges used in defining $\parent{M}$.
    Since
    \[
       \size{\parent{M}} = \size{\mu((M \setminus \set{e}) \cup \set{f}) \ge \size{M}},
    \]
    the first statement holds.
    
    To see the second statement, we consider the case $|\parent{M}| = |M|$, that is, $\mu((M \setminus \set{e}) \cup \set{f}) = (M \setminus \set{e}) \cup \set{f}$.
    Since $e \in M \setminus R$ and $f \in R \setminus M$,
    $\size{R \xor \parent{M}} = \size{R \xor M} - 2$, implying the second property.
    
    The third statement follows from, by letting $M' = (M \setminus \set{e}) \cup \set{f}$,
    \begin{align*}
        \size{M \xor \parent{M}} &= \size{M \setminus \mu(M')} + 
        \size{\mu(M') \setminus M}\\
        &\le \size{M \setminus M'} + \size{M' \setminus M} + \size{\mu(M') \setminus M'}\\
        &\le 4,
    \end{align*}
    where the last inequality follows from the proof of \Cref{lem:ana:common_independent}.
\end{proof}

The first statement of \Cref{lem:props} corresponds to the property 3.
Now, we are ready to prove the properties 2 and 4.
We first show property 2, that is, the graph defined by $\parent{\cdot}$ is a rooted forest in which each root solution is a maximum matching of $(\mat, G)$.
To this end, we assign an ordered pair of non-negative integers $v(M)$ to each maximal matching of $(\mat, G)$ as:
\begin{align*}
    v(M) = \begin{dcases}
        (0, 0) & \text{if } \size{I} = \size{R}\\
        (\size{R} - \size{M}, \size{M \xor R}) & \text{otherwise}.
    \end{dcases}
\end{align*}

For pairs of integers $(a, b)$ and $(c, d)$,
we define a lexicographic order $\preceq$, that is $(a, b) \preceq (c, d)$ if and only if $a < c$; or $a = c$ and $b < d$.
Moreover, we particularly write $(a, b) \prec (c, d)$ when $(a, b) \neq (c, d)$.
In our proof, we show that $v(\parent{M}) \prec v(M)$ for any maximal matching $M$ of $(\mat, G)$ with $|M| < |R|$, which implies the property $2$.

\begin{lemma}
    Let $M$ be a maximal matching of $(\mat, G)$. Then, $v(\parent{M}) \prec v(M)$.
\end{lemma}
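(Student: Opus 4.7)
The plan is to derive this as a direct consequence of \Cref{lem:props}, by a short case analysis on how $\size{\parent{M}}$ compares to $\size{R}$ and to $\size{M}$. Since $v$ is defined as a lexicographic pair with the first coordinate encoding the ``cardinality deficit'' $\size{R} - \size{M}$ and the second the symmetric difference $\size{R \xor M}$, each of the two bullets of \Cref{lem:props} should translate into a drop in one of the two coordinates.

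First I would dispose of the easy case $\size{\parent{M}} = \size{R}$. Here $v(\parent{M}) = (0,0)$, while $v(M) = (\size{R} - \size{M}, \size{R \xor M})$ with $\size{R} - \size{M} \ge 1$ by the hypothesis $\size{M} < \size{R}$. So $v(\parent{M}) \prec v(M)$ immediately.

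Next I would handle the case $\size{\parent{M}} < \size{R}$, in which $v(\parent{M}) = (\size{R} - \size{\parent{M}}, \size{R \xor \parent{M}})$. Here I split according to the dichotomy in the second bullet of \Cref{lem:props}. If $\size{M} < \size{\parent{M}}$, then $\size{R} - \size{\parent{M}} < \size{R} - \size{M}$, so the first coordinate strictly decreases and we are done. Otherwise the first bullet of \Cref{lem:props} forces $\size{M} = \size{\parent{M}}$, in which case the second bullet leaves only the option $\size{R \xor M} > \size{R \xor \parent{M}}$; the first coordinates of $v(M)$ and $v(\parent{M})$ then agree, while the second strictly decreases, yielding $v(\parent{M}) \prec v(M)$ once again.

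Nothing here is genuinely hard; the only point that requires a moment of care is noticing that the two bullets of \Cref{lem:props} must be combined with the case split on whether $\size{\parent{M}}$ has already reached $\size{R}$, because the definition of $v$ collapses to $(0,0)$ on maximum matchings and so is not literally a lexicographic decrease of the ``otherwise'' formula in that boundary case.
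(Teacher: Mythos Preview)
Your proposal is correct and takes essentially the same approach as the paper: both derive the lemma directly from the two bullets of \Cref{lem:props} via a short case split on whether $\size{\parent{M}}$ exceeds $\size{M}$ or not. Your version is actually a touch more careful than the paper's, which does not explicitly separate out the boundary case $\size{\parent{M}} = \size{R}$ where the definition of $v$ collapses to $(0,0)$; the paper's argument still goes through there, but your explicit handling of that case is cleaner.
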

\begin{proof}
    By \Cref{lem:props}, either $\size{R \xor M} > \size{R \xor \parent M}$ or $\size{M} < \size{\parent M}$ holds.
    When $\size{M} < \size{\parent{M}}$, we have $v(\parent{M}) \prec v(M)$ as $|R| - |\parent{M}| < |R| - |M|$.
    Otherwise, we have $\size{R \xor M} > \size{R \xor \parent M}$.
    In this case, again we have $v(\parent{M}) \prec v(M)$ as $|R \xor \parent{M}| < |R \xor M|$.
\end{proof}

Finally, we show property 4, that is, we can enumerate all children of $M$ in polynomial time.
By \Cref{lem:props}, for every child $M'$ of $M$, $|M \xor M'| \le 4$.
Thus, by guessing $M \xor \parent{M}$, we can generate all the children of $M$ in polynomial time.

\begin{theorem}\label{thm:parity:large}
    For a tractable pair $(\mat, G)$,
    \textsc{Large Maximal Matroid Matching Enumeration} can be solved in $\poly(n + Q)$ delay and $\poly(n + \hat Q)$ space.
\end{theorem}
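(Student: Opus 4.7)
The plan is to mirror the structure of the proof of \Cref{thm:large}, instantiated with the parent function $\parent{\cdot}$ on maximal matroid matchings defined via \Cref{lem:ana:common_independent}. I would verify the four properties listed at the start of \Cref{sec:parity} and then plug them into the same reverse-search template used in \Cref{algo:maximal}. Property~1 is immediate from \Cref{thm:parity:maximum}, which lets us enumerate the set $\mathcal R$ of all maximum matroid matchings in $\poly(n+Q)$ delay and $\poly(n+\hat Q)$ space. Property~3 is the first clause of \Cref{lem:props}.

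For Property~2, I would argue that the relation induced by $\parent{\cdot}$ is a rooted forest whose roots are precisely $\mathcal R$. Acyclicity follows from the potential $v(M)$: the previous lemma shows $v(\parent{M}) \prec v(M)$ for any maximal matching $M$ with $|M|<|R|$, and since $\prec$ is a well-founded lexicographic order on a finite set of pairs bounded by $n$, iterated application of $\parent{\cdot}$ must reach a matching of size $|R|$, i.e., a maximum matroid matching. For Property~4, the key is the third clause of \Cref{lem:props}, namely $|M \xor \parent{M}| \le 4$. Consequently, to enumerate the children of $M$ it suffices to iterate over all edge sets $X \subseteq E$ with $|X|\le 4$, form $M' = M \xor X$, and accept $M'$ precisely when it is a maximal matching of $(\mat,G)$ with $|M'|\ge \tau$, $|M'|<|R|$, and $\parent{M'}=M$. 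There are $O(n^8)$ candidate sets $X$; each test can be carried out in $\poly(n+Q)$ time and $\poly(n+\hat Q)$ space using the tractability of $(\mat,G)$, which also yields a polynomial-time procedure for $\mu(\cdot)$ (extend a given matroid matching to a maximal one greedily) and hence for $\parent{\cdot}$ (search over pairs $(e,f) \in (M' \setminus R) \times (R \setminus M')$ for a valid swap, then apply $\mu$).

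With all four properties verified, the algorithm is the direct analog of \Cref{algo:maximal}: enumerate $\mathcal R$, pick a fixed $R \in \mathcal R$, and from each root traverse the rooted forest in a depth-first manner, emitting the current node on entry. Correctness (no duplicates, no omissions) follows from the forest structure together with the fact that every maximal matroid matching with cardinality at least $\tau$ has an ancestor in $\mathcal R$ via \Cref{lem:props}, since each step along the parent chain strictly decreases $v$ and only passes through matchings of cardinality $\ge |M| \ge \tau$.

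For the complexity analysis, along any root-to-leaf path in the recursion tree exactly one solution is output per node, so the delay is bounded by the time spent between two consecutive outputs, which is the time to enumerate the children of a single node (at most $O(n^8)$ candidates, each verified in $\poly(n+Q)$) times the recursion depth (bounded by $\max_M v(M) = \poly(n)$), giving $\poly(n+Q)$ overall. Space is $\poly(n+\hat Q)$ since only one root-to-leaf path and local variables of polynomial size are kept in memory, consistent with the standard reverse-search analysis. The main obstacle I anticipate is purely bookkeeping: making sure that every auxiliary routine ($\mu$, the search for the swap pair $(e,f)$, and the independence/maximality tests) is indeed covered by the tractability assumption, since these subproblems involve restrictions, deletions, and contractions of $\mat$, together with spanning subgraphs of $G[X]$ or $G[S\setminus X]$, which is exactly the form the definition of tractability in \Cref{sec:prelim} permits.
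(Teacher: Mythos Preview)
Your proposal is correct and follows essentially the same approach as the paper: you verify the four properties via \Cref{thm:parity:maximum}, \Cref{lem:props}, and the potential $v(\cdot)$, then plug them into the reverse-search template of \Cref{algo:maximal}. The only minor remark is that your concern about the tractability assumption covering $\mu(\cdot)$ and the swap search is unnecessary---those routines need only the independence oracle (greedy extension and membership tests), while tractability is invoked solely for the root enumeration in Property~1.
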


\section{A framework to convert an enumeration algorithm with cardinality constraints into a ranked enumeration algorithm}
In this section, we give a framework to convert 
a polynomial-delay and polynomial-space enumeration algorithm with cardinality constraints into
a ranked enumeration algorithm that runs in linear incremental-delay and polynomial space.
Recall that an enumeration algorithm is called a \emph{ranked enumeration algorithm} if the algorithm enumerates solutions in a non-increasing order of their cardinality.

In what follows, we consider the following abstract problem.
Let $S$ be a finite set and $\mathcal F$ be a subset of $2^S$.
Let $\mathcal A(\tau)$ be an algorithm that outputs all sets in $\mathcal F$ with the cardinality at least $\tau$.
We denote the maximum delay complexity and the space complexity from $\mathcal A(\tau)$ to $\mathcal A(1)$ as $t(n)$ and $s(n)$, respectively.
Moreover, we denote the number of outputs of $\mathcal A(\tau)$ as $\#\mathcal A(\tau)$.
Under this problem setting, 
we construct a ranked enumeration algorithm that outputs the $i$-th solution in $\order{i\cdot n\cdot t(n)}$ time with $\order{s(n)}$ space as follows.

Our idea is simply to execute $\mathcal A$ from $\mathcal A(n)$ to $\mathcal A(1)$.
When $\mathcal A(k)$ outputs a solution with cardinality more than $k$, we just ignore it.
In other words, when we execute $\mathcal A(i)$, all solutions in $\mathcal F$ with cardinality exactly $i$ are output.
Clearly, we can enumerate all solutions in $\mathcal F$ in a non-increasing order of their cardinality.
We consider the time and space complexity of this method.
It is easy to see that the space complexity of this algorithm is $\order{s(n)}$ as we just execute $\mathcal A$ in order.
Thus, we estimate the running time required to output the first $i$ solutions for $i \le \size{\mathcal F}$.
Let $j \ge 1$ be the maximum integer such that $\#\mathcal A(j)$ is less than $i$.
Since the delay of $\mathcal A$ is bounded by $t(n)$ and $\#\mathcal A(j-1)$ is at least $i$,
$\mathcal A(j-1)$ outputs the $i$-th solution in $\order{i\cdot t(n)}$ time.
Since the total running time is bounded by $\order{(n-j+1)\cdot\#\mathcal A(j)\cdot t(n) + i\cdot t(n)} = \order{i\cdot n\cdot t(n)}$ time, 
this algorithm outputs the first $i$ solutions in $\order{i\cdot n\cdot t(n)}$ time.

\begin{theorem}\label{thm:top}
    Let $S$ be a finite set and $\mathcal F$ be a subset of $2^S$.
    For any $1 \le k \le \tau$, suppose that we have 
    an algorithm $\mathcal A(k)$ that enumerates 
    all sets in $\mathcal F$ with the cardinality at least $k$ for any $1 \le k \le \tau$ in $t(n)$ delay and $s(n)$ space.
    Then, there is an algorithm enumerating all subsets in $\mathcal I$ in non-increasing order of their cardinality that outputs the first $i$ solutions in $\order{i\cdot n \cdot t(n)}$ time using $\order{s(n)}$ space for $i \leq \size{\mathcal F}$.
\end{theorem}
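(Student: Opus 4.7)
The plan is exactly the one sketched before the statement: run $\mathcal{A}(k)$ for $k = n, n-1, \ldots, 1$ in succession, and during the execution of $\mathcal{A}(k)$ only emit those outputs whose cardinality equals $k$, discarding any set of cardinality strictly greater than $k$ (which must already have been emitted during an earlier phase). Since each set $X \in \mathcal{F}$ has a unique cardinality, this filter causes every set to be output exactly once, and the phases are processed from larger to smaller $k$, so the overall output order is non-increasing in cardinality.

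For the space bound, I would observe that the phases are executed sequentially and nothing needs to be remembered between phases, so the working storage is just that of a single run of $\mathcal{A}$, which is $\order{s(n)}$. The filter itself needs only $\order{\log n}$ additional space to hold the current threshold $k$ and to compare $|X|$ to $k$ as each candidate $X$ is produced.

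For the time bound, fix $i \le |\mathcal{F}|$ and let $j$ be the largest integer with $\#\mathcal{A}(j) < i$, so the $i$-th emitted solution occurs during the run of $\mathcal{A}(j-1)$. Before this moment the algorithm has completed the $n - j + 1$ phases $\mathcal{A}(n), \mathcal{A}(n-1), \ldots, \mathcal{A}(j)$; each of these phases produces at most $\#\mathcal{A}(j)$ outputs (since $\mathcal{F}$-sets of cardinality at least $k \ge j$ form a subfamily of those of cardinality at least $j$), and by the delay assumption each phase takes $\order{\#\mathcal{A}(j) \cdot t(n)}$ time, contributing $\order{(n-j+1)\cdot \#\mathcal{A}(j) \cdot t(n)}$ total. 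The partial run of $\mathcal{A}(j-1)$ adds $\order{i \cdot t(n)}$ before delivering the $i$-th solution. Using $\#\mathcal{A}(j) < i$ and $n - j + 1 \le n$, the sum simplifies to $\order{i \cdot n \cdot t(n)}$, matching the claimed bound.

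The only subtle point is the filtering overhead: a phase may output many sets of cardinality larger than its current threshold $k$, and we must show that discarding them does not inflate the delay. This is immediate since each discarded output still costs only $\order{t(n)}$ inside the underlying algorithm and the $\#\mathcal{A}(j)$ upper bound on per-phase outputs already accounts for all of them, filtered or not. Thus no obstacle arises and the stated complexity follows directly.
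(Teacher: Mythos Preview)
Your proposal is correct and follows essentially the same approach as the paper: run $\mathcal A(n),\mathcal A(n-1),\ldots,\mathcal A(1)$ in order, discard outputs of cardinality strictly greater than the current threshold, and bound the time to the $i$-th output by choosing the largest $j$ with $\#\mathcal A(j)<i$ and summing $(n-j+1)\cdot\#\mathcal A(j)\cdot t(n)+i\cdot t(n)=\order{i\cdot n\cdot t(n)}$. Your treatment is in fact slightly more careful than the paper's, since you explicitly address why the filtering overhead is already absorbed by the per-phase output bound.
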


We obtain a linear incremental-time and polynomial-space ranked enumeration algorithm for maximal common independent sets of two matroids by combining \Cref{thm:top,thm:large}.

\begin{theorem}\label{thm:ranked}
    There is a linear incremental-time and polynomial-space algorithm for enumerating 
    all maximal common independent sets in two matroids in non-increasing order.
    This algorithm outputs the first $i$ solutions in $\order{i\cdot n^7Q}$ time.
\end{theorem}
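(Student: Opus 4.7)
The plan is to combine \Cref{thm:large} with the abstract conversion provided by \Cref{thm:top}. First I would take the algorithm guaranteed by \Cref{thm:large}: for any integer $\tau$, it enumerates all maximal common independent sets of $\mat_1$ and $\mat_2$ of cardinality at least $\tau$ with delay $t(n) = \order{n^6 Q}$ and in space $s(n) = \order{n^2 + \hat{Q}}$. These two bounds are uniform in $\tau$, which is exactly the shape of input that \Cref{thm:top} requires.

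Next I would instantiate \Cref{thm:top} by taking $S$ to be the common ground set of $\mat_1$ and $\mat_2$ and $\mathcal{F}$ to be the family of all maximal common independent sets of $\mat_1$ and $\mat_2$, with $\mathcal{A}(k)$ being the algorithm just described. The conclusion of \Cref{thm:top} then delivers a ranked enumeration algorithm that, for every $i \le |\mathcal{F}|$, outputs the first $i$ solutions in non-increasing order of cardinality within
\[
\order{i \cdot n \cdot t(n)} = \order{i \cdot n \cdot n^6 Q} = \order{i \cdot n^7 Q}
\]
time, while the working space remains $\order{s(n)} = \order{n^2 + \hat{Q}}$, which is polynomial in $n + \hat{Q}$.

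I do not expect any real obstacle: the argument is a direct substitution of the delay and space bounds from \Cref{thm:large} into the generic statement of \Cref{thm:top}. The only mild point worth noting is that \Cref{thm:top} imposes no structural requirement on the family $\mathcal{F}$ beyond the existence of the cardinality-constrained enumerator $\mathcal{A}(k)$, so its application to the family of maximal common independent sets is immediate and no additional property (such as monotonicity or closure) needs to be verified.
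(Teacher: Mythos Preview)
Your proposal is correct and matches the paper's own argument essentially verbatim: the paper proves \Cref{thm:ranked} simply by combining \Cref{thm:top} with the delay and space bounds of \Cref{thm:large}, exactly as you describe. There is nothing to add.
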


\begin{theorem}\label{thm:parity:ranked}
    For a tractable pair,
    there is a linear incremental-time and polynomial-space algorithm for enumerating 
    all maximal matchings of a matroid in non-increasing order.
\end{theorem}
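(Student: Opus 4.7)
The plan is to invoke the generic conversion framework of \Cref{thm:top} directly on the polynomial-delay, polynomial-space algorithm from \Cref{thm:parity:large}. Set $S = E$ and let $\mathcal F$ be the collection of maximal matchings of the tractable pair $(\mat, G)$. For each threshold $\tau$ with $1 \le \tau \le n$, \Cref{thm:parity:large} supplies an algorithm $\mathcal A(\tau)$ that enumerates all $M \in \mathcal F$ with $\size{M} \ge \tau$ in delay $t(n) = \poly(n + Q)$ and space $s(n) = \poly(n + \hat Q)$, uniformly in $\tau$. This is exactly the input required by \Cref{thm:top}.

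First, I would run $\mathcal A(n)$, then $\mathcal A(n-1)$, and so on down to $\mathcal A(1)$, filtering each call so that during the execution of $\mathcal A(k)$ we report only those maximal matchings whose cardinality equals exactly $k$; any matching of cardinality strictly greater than $k$ will already have been reported during an earlier call and is therefore discarded. The resulting output sequence lists every maximal matching of $(\mat, G)$ in non-increasing order of cardinality, which is the desired ranked output.

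For the complexity bounds, observe that the calls are executed sequentially, so the space usage is dominated by the current call and remains $\poly(n + \hat Q)$; the filtering step only needs to inspect the current candidate and contributes no extra space. For the incremental time, we reuse the counting argument from \Cref{thm:top}: if $j$ is the largest integer with $\#\mathcal A(j) < i$, then delivering the first $i$ solutions costs at most $O((n - j + 1) \cdot \#\mathcal A(j) \cdot t(n) + i \cdot t(n)) = O(i \cdot n \cdot t(n))$, which is linear in $i$ up to a polynomial factor in $n + Q$. Combining this with \Cref{thm:parity:large} yields the claimed linear incremental-time and polynomial-space ranked enumeration algorithm.

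Since both ingredients are already in hand, there is no real technical obstacle: the proof is essentially the same one-line reduction used to derive \Cref{thm:ranked} from \Cref{thm:top} and \Cref{thm:large}, with \Cref{thm:parity:large} playing the role of \Cref{thm:large}. The only point worth flagging is the need to argue that the per-call delay bound is independent of $\tau$, which is indeed the case for the algorithm underlying \Cref{thm:parity:large}, so that $t(n)$ is well-defined in the statement of \Cref{thm:top}.
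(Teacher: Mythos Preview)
Your proposal is correct and follows exactly the paper's approach: the paper derives \Cref{thm:parity:ranked} immediately from the conversion framework of \Cref{thm:top} applied to the cardinality-constrained algorithm of \Cref{thm:parity:large}, just as it derives \Cref{thm:ranked} from \Cref{thm:top} and \Cref{thm:large}. The paper does not even spell out a separate proof for \Cref{thm:parity:ranked}, so your write-up is, if anything, more detailed than the original.
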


\section{Applications of our algorithms}\label{sec:application}
Due to an expressive power of \textsc{Matroid Intersection} and \textsc{Matroid Matching},
\Cref{thm:large,thm:ranked,thm:parity:large,thm:parity:ranked} give enumeration algorithms for various combinatorial objects in a unified way.
An example of such objects is a $b$-matchings in bipartite graphs.
In this section, we present several combinatorial objects represented by an intersection of basic matroids or a matching of a tractable pair.
We would like to note that the following facts are well-known in the literature but we concisely explain them for self-containedness.

Let $G = (V, E)$ be an undirected graph and $\mathcal F$ be the set of forests in $G$.
It is known that $(E, \mathcal F)$ is a matroid, called a \emph{graphic matroid}.
Let $S$ be a finite set, $\mathcal S = \set{S_1, \dots, S_k}$ be a partition of $S$, and 
$s_1, \dots s_k$ be $k$ integers.
We define $\mathcal I \coloneqq \inset{T \subseteq S}{\size{T \cap S_i} \le s_i \text{ for each } 1 \le i \le k}$.
Then, it is known that $(S, \mathcal I)$ is a matroid, called a \emph{partition matroid}. 
See \cite{book:oxley} for more details.

Let $G = (X, Y, E)$ be a bipartite graph with color classes $X$ and $Y$.
Let $b \colon X \cup Y \to \mathbb N$.
An edge subset $M \subseteq E$ is called a \emph{$b$-matching} of $G$ if for every $v \in X \cup Y$, the number of edges in $M$ incident to $v$, denoted $d_M(v)$, is at most $b(v)$.
When $b(v) = 1$ for $v \in X \cup Y$, $M$ is indeed a matching of $G$.
We define two partition matroids $\mat_X = (E, \mathcal I_X)$ and $\mat_Y = (E, \mathcal I_Y)$ as
\begin{align*}
    \mathcal I_X &= \set{M \subseteq E : d_M(v) \le b(v) \text{ for } v \in X}\\
    \mathcal I_Y &= \set{M \subseteq E : d_M(v) \le b(v) \text{ for } v \in Y}.
\end{align*}
Thus, the set of all $b$-matchings of $G$ is defined as the set of all common independent sets of $\mat_X$ and $\mat_Y$.

Let $G = (V, E)$ be an edge-colored undirected graph and $E_i$ be the set of edges with color $i$.
An edge subset $F$ of $G$ is called a \emph{colorful forest} if it is a forest and edges in it have distinct colors.
We consider two matroids $\mat_G = (E, \mathcal I_G)$ and $\mat_c = (E, \mathcal I_c)$ as
\begin{align*}
    \mathcal I_G &= \set{F \subseteq E : F \text{ is a forest in } G}\\
    \mathcal I_c &= \set{F \subseteq E : \size{F \cap E_i} \le 1}.
\end{align*}
Thus, the set of all colorful forests of $G$ is defined as the set of all common independent sets of $\mat_G$ and $\mat_c$.

Let $D = (V, A)$ be a directed graph and let $\delta^+$ and $\delta^-$ be functions from $V$ to $\mathbb N$.
An edge subset $F$ of $D$ is called a \emph{degree constrained subgraph} (for $\delta^+$ and $\delta^-$) of $G$ if for every $v \in V$, it holds that $d^+_F(v) \le \delta^+(v)$ and $d^-_F(v) \le \delta^-(v)$, where $d^+_F(v)$ (resp. $d^-_F(v)$) is the number of outgoing arcs from $v$ (resp. incoming arcs to $v$) in $F$.
We define two matroids $\mat_+ = (A, \mathcal I_+)$ and $\mat_- = (A, \mathcal I_-)$ as
\begin{align*}
    \mathcal I_+ &= \set{F \subseteq A : d_F^+(v) \le \delta^+(v) \text{ for each } v \in V}\\
    \mathcal I_- &= \set{F \subseteq A : d_F^-(v) \le \delta^-(v) \text{ for each } v \in V}.
\end{align*}
Thus, the set of all degree constrained subgraphs of $D$ is defined as the set of all common independent sets of $\mat_+$ and $\mat_-$.

For the above matroids, we can easily construct polynomial-time independence oracles.

\begin{theorem}\label{thm:app}
    There are polynomial-delay and polynomial-space enumeration algorithms for 
    \begin{itemize}
        \item maximal bipartite $b$-matchings with cardinality at least $\tau$,
        \item maximal colorful forests with cardinality at least $\tau$, and
        \item maximal degree constrained subgraphs in digraphs with cardinality at least $\tau$,
    \end{itemize}
    Moreover, there are linear incremental-time and polynomial-space ranked enumeration algorithms for the above problems.
\end{theorem}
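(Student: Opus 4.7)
The plan is to reduce each of the three combinatorial settings to \textsc{Large Maximal Common Independent Set Enumeration} (and its ranked variant) and then invoke \Cref{thm:large} and \Cref{thm:ranked} as black boxes. The preceding discussion already describes, for each of the three objects, two matroids $\mat_1, \mat_2$ on a common ground set such that the feasible objects are exactly the common independent sets $\mathcal I_1 \cap \mathcal I_2$. So the remaining work is twofold: (i) check that the independence oracles of these matroids can be evaluated in polynomial time and polynomial space, so that the parameters $Q$ and $\hat Q$ in \Cref{thm:large,thm:ranked} are polynomial in $n$, and (ii) verify that maximality as a common independent set coincides with maximality as the combinatorial object. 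Both items are straightforward for the three cases, but (ii) is the only step that requires a tiny argument, so I describe it explicitly below.

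For the oracle side, I would first observe that each matroid appearing in the reductions is either a graphic matroid or a partition matroid, both of which admit independence tests in $O(n)$ time and $O(n)$ space by elementary means (cycle detection via union-find for graphic matroids, and per-part counting for partition matroids). Hence $Q = \poly(n)$ and $\hat Q = \poly(n)$, and the complexity bounds in \Cref{thm:large,thm:ranked} immediately yield $\poly(n)$-delay and $\poly(n)$-space algorithms.

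For the maximality side, I would argue the following in each case. Suppose $F$ is a maximal common independent set of $\mat_1$ and $\mat_2$; then by definition no element $e \in S \setminus F$ can be added while keeping $F$ independent in both matroids, which is precisely the failure condition for extending $F$ as a bipartite $b$-matching, colorful forest, or degree-constrained subgraph, respectively. Conversely, any maximal such combinatorial object is necessarily a common independent set with the same inextendibility property, and hence is a maximal common independent set of the corresponding two matroids. Thus the two notions of maximality coincide.

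The main (mild) obstacle will be making sure that, for each of the three constructions recalled just before the theorem, the matroids live on the intended ground set (the set of edges/arcs of the input graph) so that ``cardinality at least $\tau$'' refers to the number of edges selected; this matches the cardinality used in \Cref{thm:large,thm:ranked}, which counts elements of $S$. Once this bookkeeping is in place, applying \Cref{thm:large} gives the first statement of the theorem and applying \Cref{thm:ranked} gives the ranked enumeration statement, completing the proof.
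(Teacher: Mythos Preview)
Your proposal is correct and follows essentially the same route as the paper: the paper does not give a separate proof of this theorem but simply exhibits, in the paragraphs immediately preceding it, the two matroids for each application and remarks that their independence oracles can be constructed in polynomial time, after which \Cref{thm:large} and \Cref{thm:ranked} apply directly. Your additional remarks on the coincidence of the two notions of maximality and on the ground-set/cardinality bookkeeping are harmless elaborations of points the paper leaves implicit.
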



As an application of \Cref{thm:parity:large}, we give a polynomial-delay algorithm for enumerating minimal connected vertex covers with cardinality at most $\tau$ in subcubic graphs by transforming the instance into that of \textsc{Large Maximal Matroid Matching Enumeration}.
This reduction is presented in~\cite{UENO1988355}, where they gave a polynomial-time algorithm for finding a minimum connected vertex cover in subcublic graphs.
To be self-contained, we give a bijection between the set of minimal connected vertex covers in a subcubic graph $G$ and the set of maximal matchings of a tractable pair $(\mat, H)$.


Let $G = (V, E)$ be a connected subcubic graph.
We assume that $G$ has at least two verticces, as otherwise the problem is trivial.
For a graph, a vertex set is said to be \emph{connected} if it induces a connected subgraph in it.
In the following, we rather consider the collection of the complements of minimal connected vertex covers of $G$.
It is easy to see that $X \subseteq V$ is a (minimal) connected vertex cover of $G$ if and only if $V\setminus X$ is a (maximal) non-separating independent set of $G$, that is, $V \setminus X$ is an independent set of $G$ and $G[X]$ is connected.
We construct the following graph $G' = (V', E')$, which may have self-loops and parallel edges.
We start with $G' = G$ and, for each vertex $v$ of $G$, we do the following.
\begin{enumerate}
    \item If $v$ has exactly three incident edges $e_1, e_2, e_3$,
    we replace $v$ with three vertices $v_1$, $v_2$ and $v_3$ in such a way that each $v_i$ has $e_i$ as an incident edge.
    We also add two edges $f^v_1 = \set{v_1, v_2}$ and $f^v_2 = f\set{v_2, v_3}$ to $G'$.
    \item If $v$ has exactly two incident edges $e_1$ and $e_2$, 
    we replace $v$ with two vertices $v_1$ and $v_2$ in such a way that each $v_i$ has $e_i$ as an incident edge.
    We also add two parallel edges $f^v_1$ and $f^v_2$ between $v_1$ and $v_2$.
    \item If $v$ has only one incident edge $e = \set{u, v}$,
    we add two self-loops $f^v_1$ and $f^v_2$ incident to $v$.
    In this case, we set $v_1 = v$.
\end{enumerate}
Moreover, for each (original) edge $e \in E$, we subdivide it into a path with length $2$.
The two edges in the path are denoted by $e'$ and $e''$. 
See \Cref{fig:construct} for a concrete example.
For the resultant graph $G'$, we consider the \emph{cographic matroid} $\mat$ of $G'$, that is, $\mat = (E', \mathcal I_{G'})$ such that, for $F \subseteq E'$, $F \in \mathcal I_{G'}$ if and only if $G' - F$ is connected.

\begin{figure}[t]
    \centering
    \includegraphics[width=0.6\textwidth]{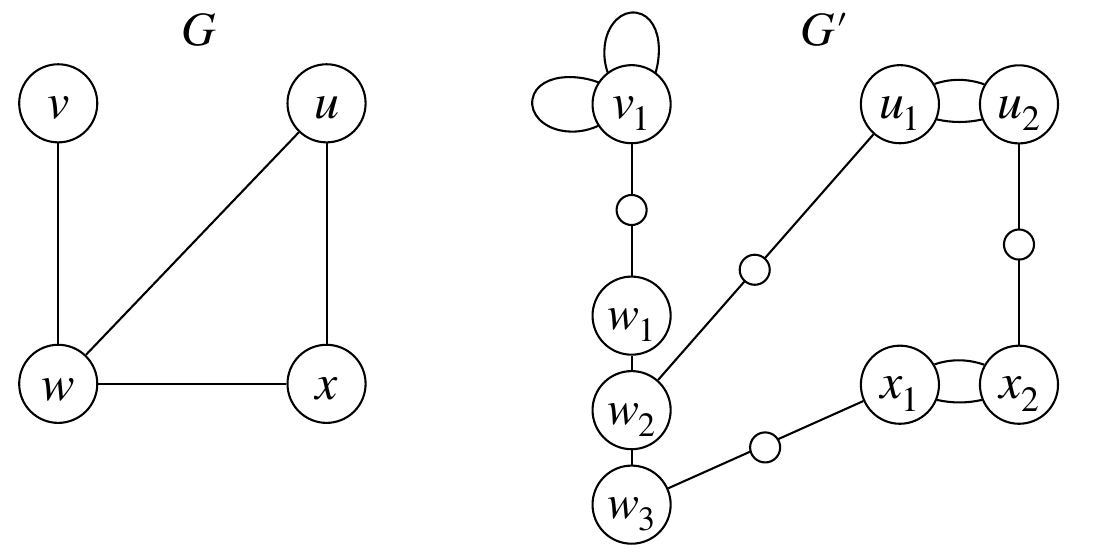}
    \caption{The construction of a subcubic graph $G'$ from a subcubic graph $G$.
    The set of non-separating independent sets of $G$ corresponds to the set of maximal cographic matroid matching of $G'$.}
    \label{fig:construct}
\end{figure}

We next define another graph $H = (V_H, E_H)$.
The vertex set of $H$ corresponds to $V_H = E'$ and the edge set corresponds to $E_H = \set{\set{f^v_1, f^v_2} : v \in V} \cup \set{\set{e', e''} : e \in E}$.
Since every cographic matroid is linear, the pair $(\mat, H)$ is tractable.
We show that each a matching of $\mathbf M$ corresponds to a non-separating independent set of $G$ and vice-versa.
From the definition of $G'$, there is a bijection $\phi$ between the edges in $H$ and $V \cup E$: for $e_H \in E_H$
\begin{align*}
    \phi(e_H) = \begin{dcases}
          v & \text{if } e_H = \{f^v_1, f^v_2\}\\
          e & \text{if } e_H = \{e', e''\}
    \end{dcases}.
\end{align*}
For $M \subseteq E_H$, we let $\phi(M) = \bigcup_{e_H \in M} \phi(e_H)$.
The following lemma is the heart of our reduction.

\begin{lemma}\label{lem:app:matching-nis}
    Let $M \subseteq E_H$.
    If $M$ is a matching of $(\mat, H)$, then we have $\phi(M) \subseteq V$.
    Moreover, suppose that $\phi(M) \subseteq V$.
    Then, $M$ is a matching of $(\mat, H)$ if and only if $\phi(M)$ is a non-separating independent set of $G$.
\end{lemma}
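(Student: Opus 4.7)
The plan is to prove the two claims of the lemma separately. For the first claim, that any matching $M$ of $(\mat, H)$ satisfies $\phi(M) \subseteq V$, I would argue by contradiction. Suppose some type-2 edge $\{e', e''\} \in M$, so that $e', e'' \in V(M)$. The key observation is that the subdivision vertex $m_e$ in $G'$ has degree exactly two, with its only incident edges being $e'$ and $e''$. Hence $m_e$ becomes isolated in $G' - V(M)$, and because $G'$ is a connected graph with $|V(G')| \geq 3$ (since $|V| \geq 2$ and $|E| \geq 1$), the graph $G' - V(M)$ is disconnected. By the standard characterization of the cographic matroid $\mat$ as having bases equal to complements of spanning trees of $G'$, this means $V(M)$ is dependent in $\mat$, contradicting the matching property.

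For the second claim, fix $M$ with $\phi(M) \subseteq V$ and let $X = \phi(M)$. Since distinct type-1 edges $\{f^u_1, f^u_2\}$ and $\{f^v_1, f^v_2\}$ (for $u \neq v$) are vertex-disjoint in $H$, the condition that $M$ is a matching in $H$ is automatic. So I must show that $V(M) = \bigcup_{v \in X}\{f^v_1, f^v_2\}$ is independent in $\mat$ if and only if $X$ is independent in $G$ and $G[V \setminus X]$ is connected; by the cographic characterization, the left-hand side is equivalent to $G' - V(M)$ being connected.

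For the forward direction I would argue by contraposition. If $u, v \in X$ are adjacent via $e = \{u, v\}$, then both gadgets of $u$ and $v$ are broken in $G' - V(M)$, and the copies $u_i, v_j$ incident to $e$ each have $m_e$ as their only remaining neighbor; hence $\{u_i, m_e, v_j\}$ forms an isolated three-vertex component, disconnecting $G' - V(M)$ (outside trivial cases). If instead $G[V \setminus X]$ splits as $A \sqcup B$ with no crossing $G$-edge, then I would partition $V(G') \setminus V(M)$ into an $A$-side and a $B$-side consisting of the intact gadgets of $A$- or $B$-vertices, the subdivisions $m_e$ for edges incident to the respective side, and the broken $X$-copies attached to those subdivisions. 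Independence of $X$ (which may be assumed here by the previous sub-case) and the absence of $A$-$B$ edges ensure no edge of $G' - V(M)$ crosses the partition.

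For the reverse direction, assume $X$ is independent and $G[V \setminus X]$ is connected; fix any $v_0 \in V \setminus X$, which exists because independence of $X$ together with connectivity of $G$ and $|V| \geq 2$ forces $V \setminus X \neq \emptyset$. Each intact gadget of $w \in V \setminus X$ is internally connected in $G' - V(M)$, and any $G[V \setminus X]$-path from $v_0$ to $w$ lifts via the subdivision gadgets to a $G' - V(M)$-path, so all intact gadgets lie in the $v_0$-component. Independence of $X$ then implies that every subdivision vertex $m_e$ and every broken copy $u_i$ of some $u \in X$ is adjacent to a copy lying in an intact gadget, and hence also belongs to the $v_0$-component. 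The main obstacle will be the clean case analysis across the three gadget types—especially the degree-one case, whose gadget edges are self-loops and hence coloops of $\mat$—together with tracking how subdivisions and broken copies attach to intact gadgets.
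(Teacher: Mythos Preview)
Your plan follows essentially the same strategy as the paper's: isolate the subdivision vertex for the first claim, exhibit the three-vertex fragment $\{u_i, m_e, v_j\}$ for the independence sub-case, and for the converse lift paths of $G[V\setminus X]$ to $G'-V(M)$ and then attach the broken $X$-copies through their unique remaining subdivision neighbor. The only cosmetic difference is in the non-separating sub-case, where the paper argues that every $G'$-path between gadgets of two separated vertices of $V\setminus X$ must use an edge of $V(M)$, whereas you phrase the same fact as an explicit bipartition of $V(G')$ with no edge of $G'-V(M)$ crossing it.
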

\begin{proof}
    Suppose that $M$ is a matching of $(\mat, H)$.
    Observe that $M$ has no edge $e_H \in E_H$ with $\phi(e_H) = e$ for some $e \in E$.
    This follows from the fact that by removing $e'$ and $e''$ from $G'$, the central vertex becomes an isolated vertex, which implies that $e_H = \set{e', e''}$ is dependent in the cographic matroid $\mat$.
    Thus, we have $\phi(M) \subseteq V$.

    To prove the latter claim, suppose that $\phi(M) \subseteq V$.
    We first show that $\phi(M)$ is an independent set of $G$ for each matching $M$ of $(\mat, H)$.
    Let $u$ and $v$ be two adjacent vertices in $G$.
    Suppose for contradiction that $M$ contains two edges
    $\phi^{-1}(u) = \set{f^u_1, f^u_2} \in E_H$ and $\phi^{-1}(v) = \set{f^v_1, f^v_2} \in E_H$.
    Since each edge in $G$ is replaced with a path of length $2$ in $G'$, $G'$ has a path of length $2$ between $u_i$ and $v_j$ for some $i$ and $j$.
    We let the central vertex on the path be $w$.
    By the construction of $G'$, the degrees of $u_i$ and $v_j$ are $1$ in $G' - \set{f^u_1, f^u_2, f^v_1, f^v_2}$, and $u$ and $v$ are both adjacent to $w$.
    This implies that $\set{f^u_1, f^u_2, f^v_1, f^v_2}$ is dependent in $\mat$, a contradiction.

    We next show that $\phi(M)$ is non-separating in $G$, that is, $G[V \setminus \phi(M)]$ is connected.
    Suppose for contradiction that $G[V \setminus \phi(M)]$ is disconnected.
    Let $u$ and $v$ be two vertices in $V \setminus \phi(M)$ such that they belong to distinct connected components in $G[V \setminus \phi(M)]$.
    For $w \in V$, we denote by $V'(w)$ the set of (at most three) vertices in $G'$ corresponds to $w$.
    Let $P'$ be an arbitrary path between $V'(u)$ and $V'(v)$ in $G'$ such that all internal vertices are disjoint from $V'(u) \cup V'(v)$.
    We claim that $P'$ contains at least one edge $f^w_i$ such that $\set{f^w_1, f^w_2} \in M$, meaning that $V(M)$ is dependent in $\mat$. 
    From $P'$, we can naturally construct a path $P$ between $u$ and $v$ in $G$, by taking all edges $e \in E$ such that $\set{e', e''}$ is contained in $P'$.
    As $u$ and $v$ are not connected in $G[V \setminus \phi(M)]$, at least one vertex $w \neq u, v$ of $P$ is contained in $\phi(M)$.
    This implies that $P'$ contains at least two vertices from $V'(w)$.
    As $P'$ contains at least one of $f^w_1$ and $f^w_2$, this leads a contradiction.


    Conversely, suppose that $\phi(M)$ is a non-separating independent set of $G$.
    We claim that $M$ is a matching of $(\mat, H)$.
    To see this, we show the following two claims:
    \begin{enumerate}
        \item For any pair of distinct vertices $u, v \in V \setminus \phi(M)$, $G' - \bigcup_{e_H \in M}e_H$ has a path between a vertex in $V'(u)$ and a vertex in $V'(v)$; and
        \item For any $u \in \phi(M)$, there is a vertex $v \in V \setminus \phi(M)$ such that 
        $G' - \bigcup_{e_H \in M} e_H$ has a path between $u_i$ and a vertex in $V'(v)$ for any $u_i \in V'(u)$.
    \end{enumerate}
    Let us note that $G[V'(w)]$ is connected if $w \notin V \setminus \phi(M)$.
    These two claims together imply that $G' - \bigcup_{e_H \in M} e_H$ is connected, meaning that $M$ is a matching of $(\mat, H)$.

    The first claim is verified as follows.
    Since $I$ is a non-separating independent set  of $G$, for $u, v \in V \setminus I$, 
    $G[V \setminus I]$ has a path $P$ between $u$ and $v$. 
    Thus, $G' - \bigcup_{e_H \in M}e_H$ also has a path $P'$ between a vertex in $V'(u)$ and a vertex $V'(v)$ by taking all edges of the preimage of $P$ under $\phi$.

    The second claim is verified as follows.
    Let $u \in \phi(M)$ and let $v \in V \setminus \phi(M)$.
    Since $\phi(M)$ is an independent set of $G$,
    $u$ has a neighbor $w \notin I$.
    As $\phi(M) \subseteq V$, there is a path corresponding to the edge $\set{u, w}$ in $G' - \bigcup_{e_H \in M} e_H$.
    Moreover, by the first claim, $G' - \bigcup_{e_H \in M} e_H$ has a path between $w_i$ and $v_j$ for any $w_i \in V'(w)$ and $v_j \in V'(v)$, which proves the second claim by concatenating these paths. 
 \end{proof}

As observed in the proof of \Cref{lem:app:matching-nis}, every matching of $(\mat, H)$ does not contain any edge $e_H$ with $\phi(e_H) = \set{e', e''}$, the non-separating independent set $\phi(M) \cap V$ of $G$ is uniquely determined and vice-versa.
Thus, it suffices to enumerate all maximal matchings of $(\mat, H)$ to enumerate all maximal non-separating independent sets of $G$ (or equivalently minimal connected vertex covers of $G$).
It is known that every cographic matroid $\mat$ is linear and a matrix representation of $\mat$ can be constructed in polynomial time~\cite{book:oxley}.
Therefore, $(\mat, H)$ is tractable~\cite{Gabow1986}, and
we obtain the following theorem.

\begin{theorem}\label{thm:app:matroid}
    There is a polynomial-delay and polynomial-space 
    algorithm that, given a subcubic graph $G$ and an integer $\tau$, enumerates all minimal connected vertex covers of $G$ with cardinality at most $\tau$.
    Moreover, there is linear incremental-time and polynomial-space ranked enumeration algorithms for the above problem.
\end{theorem}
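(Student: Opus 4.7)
The plan is to reduce the problem to \textsc{Large Maximal Matroid Matching Enumeration} on the pair $(\mat, H)$ constructed before the theorem, and then invoke \Cref{thm:parity:large,thm:parity:ranked}. By \Cref{lem:app:matching-nis}, every matching $M$ of $(\mat,H)$ satisfies $\phi(M) \subseteq V$, and $M$ is a matching of $(\mat,H)$ if and only if $\phi(M)$ is a non-separating independent set of $G$. Since $\phi$ is injective on the edges of $H$ that can appear in any matching of $(\mat,H)$, this restricts to a bijection between the family of matchings of $(\mat,H)$ and the family of non-separating independent sets of $G$ that preserves cardinality, i.e.\ $|M| = |\phi(M)|$. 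This bijection also preserves the maximality relation: $M$ is a maximal matching of $(\mat,H)$ if and only if $\phi(M)$ is a maximal non-separating independent set of $G$, which in turn is exactly the complement of a minimal connected vertex cover of $G$.

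The second step is to translate cardinality constraints. If $C$ is a minimal connected vertex cover of $G$ with $|C| \le \tau$, then the corresponding maximal matching $M$ of $(\mat, H)$ satisfies $|M| = |V \setminus C| \ge |V| - \tau$. Conversely, every maximal matching $M$ of $(\mat,H)$ with $|M| \ge |V| - \tau$ yields, via $V \setminus \phi(M)$, a minimal connected vertex cover of $G$ of size at most $\tau$. Hence it suffices to enumerate all maximal matchings of $(\mat, H)$ of cardinality at least $|V|-\tau$ and, for each such $M$, output $V \setminus \phi(M)$ as a minimal connected vertex cover.

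Next, I would verify tractability of $(\mat,H)$ in the sense of \Cref{sec:prelim}. The cographic matroid $\mat$ of $G'$ is a linear matroid and a matrix representation can be computed in polynomial time (see~\cite{book:oxley}). For any $X \subseteq V_H = E(G')$, matrix representations of $\mat \rest X$, $\mat \setminus X$, and $\mat \cont X$ over the same field can again be computed in polynomial time, and by Gabow--Stallmann~\cite{Gabow1986} a maximum matching of a linear matroid together with a graph can be found in polynomial time and polynomial space. This gives tractability of $(\mat,H)$ and of all the pairs that arise after restriction, deletion, and contraction, as required.

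Putting these ingredients together, \Cref{thm:parity:large} applied to $(\mat, H)$ with threshold $|V| - \tau$ yields the desired polynomial-delay and polynomial-space enumeration algorithm for minimal connected vertex covers of $G$ of size at most $\tau$; applying \Cref{thm:parity:ranked} and reversing the cardinality order (since smaller cover corresponds to larger matching) yields the linear incremental-time and polynomial-space ranked version. The main obstacle, already handled by \Cref{lem:app:matching-nis}, is the careful correspondence between matchings of $(\mat, H)$ and non-separating independent sets of $G$; once that bijection is in place, everything else is bookkeeping of cardinalities and an invocation of the generic theorems for tractable pairs.
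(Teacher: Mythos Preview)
Your proposal is correct and follows essentially the same route as the paper: reduce to \textsc{Large Maximal Matroid Matching Enumeration} on the pair $(\mat,H)$ built from the cographic matroid of $G'$, use \Cref{lem:app:matching-nis} for the bijection with non-separating independent sets (hence with complements of minimal connected vertex covers), observe that $(\mat,H)$ is tractable because cographic matroids are linear, and invoke \Cref{thm:parity:large,thm:parity:ranked}. You are in fact a bit more explicit than the paper in spelling out the threshold translation $|M|\ge |V|-\tau$ and the order correspondence for the ranked version.
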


\bibliographystyle{abbrv}
\bibliography{main.bib}

\end{document}